\documentclass{amsart}

\usepackage{amssymb, amsmath, amsthm}
\usepackage{mathrsfs}
\usepackage[fleqn,tbtags]{mathtools}
\usepackage[shortlabels]{enumitem}

\usepackage[colorlinks,linkcolor={blue},citecolor={blue},urlcolor={red}]{hyperref}

\let\le\leqslant
\let\leq\leqslant
\let\ge\geqslant
\let\geq\geqslant

\theoremstyle{plain}
\newtheorem{theorem}{Theorem}[section]
\theoremstyle{remark}
\newtheorem{remark}[theorem]{Remark}
\theoremstyle{plain}
\newtheorem{proposition}[theorem]{Proposition}

\newtheorem{lemma}[theorem]{Lemma}
\numberwithin{equation}{section}
\newcommand{\D}{\{-1,1\}}
\newcommand{\E}{\mathbb E}
\newcommand{\cR}{\mathcal R}
\newcommand{\Rad}{\operatorname{Rad}}

\newcommand{\Dom}{\mathsf{D}}
\newcommand{\calL}{\mathscr L}
\newcommand{\dd}{\,\mathrm d}
\newcommand{\R}{\mathbb R}

\title[Dimension-free $H^\infty$-calculus of angle $<\pi/2$]{Dimension-free $H^\infty$-calculus of angle $<\pi/2$ for UMD-valued Ornstein--Uhlenbeck operators}

\begin{document}

\author{Jan van Neerven}

\address{Delft Institute of Applied Mathematics\\
Delft University of Technology\\P.O. Box 5031\\2600 GA Delft\\The Netherlands}
\email{J.M.A.M.vanNeerven@tudelft.nl}

\begin{abstract}
Let $X$ be a UMD Banach space, let $1<p<\infty$, and let $L_d$ be the generator of the Ornstein--Uhlenbeck semigroup $(P_d(t))_{t\geq 0}$ on $L^p(\mathbb R^d,\gamma_d;X)$. We prove that the operators $-L_d$ are $R$-sectorial with a common angle strictly smaller than $\pi/2$ and with bounds independent of $d$. Combining this with the Hieber--Pr\"uss transference theorem and the Kalton--Weis angle comparison, we deduce that the operators $-L_d$ admit bounded $H^\infty$-calculi of a common angle strictly smaller than $\pi/2$, again with dimension-free bounds. The corresponding Walsh $R$-analyticity estimates are proved first and transferred to the Ornstein--Uhlenbeck setting by a central limit argument.
\end{abstract}

\subjclass[2020]{Primary: 47A60; Secondary: 47D06; 46B09, 42C10, 60H07.}

\keywords{Ornstein--Uhlenbeck semigroup, Walsh semigroup, UMD space, $R$-sectoriality, $H^\infty$-functional calculus, dimension-free estimates}

\date{\today}

\maketitle

\section{Introduction and main results}
\label{sec:introduction}

The bounded $H^\infty$-functional calculus of generators of symmetric contraction semigroups is well understood in the scalar-valued setting. In particular, if $-A$ is the generator of such a semigroup $(e^{-tA})_{t\geq 0}$ on a scalar $L^p$-space, then Carbonaro and Dragi\v{c}evi\'c~\cite{CarbonaroDragicevic} proved that
\begin{equation*}
 \omega_{H^\infty}(A) \leq \phi_p^*, \qquad \hbox{where }\, \phi_p^*
 = \arcsin\Bigl|1-\frac2p\Bigr|
 < \frac{\pi}{2}, \qquad 1<p<\infty,
\end{equation*}
and the angle $\phi_p^*$ is optimal in this generality. The strict inequality $\omega_{H^\infty}(A) < \pi/2$ is important: an $H^\infty$-calculus of angle strictly smaller than $\pi/2$ is closely tied to $R$-sectoriality and, on UMD spaces, to maximal $L^p$-regularity and stochastic maximal $L^p$-regularity.

The corresponding question for vector-valued $L^p$-spaces is much less settled. Let $X$ be a UMD Banach space and let $(T(t))_{t\geq 0}$ be a $C_0$-contraction semigroup of positive operators on a scalar $L^p$-space. The Hieber--Pr\"uss transference theorem gives a bounded $H^\infty(\Sigma_\nu)$-calculus for the negative generator of the tensor extension of $T$ to $L^p(X)$ for every $\nu\in(\pi/2,\pi)$; see \cite{HieberPruss,HNVW2}. This result is very general, but by itself it does not give a calculus of angle strictly smaller than $\pi/2$.

Let $(P_d(t))_{t\geq 0}$ denote the Ornstein--Uhlenbeck semigroup on $L^p(\mathbb R^d,\gamma_d;X)$, let $L_d$ be its generator, and put
$$
 A_d:=-L_d.
$$
The main result of this paper is that the operators $A_d$ admit a bounded $H^\infty$-calculus of an angle strictly smaller than $\pi/2$, with dimension-free estimates:

\begin{theorem}[Dimension-free $H^\infty$-calculus of angle $<\pi/2$]
\label{thm:main-Hinfty}
Let $X$ be a UMD space and let $1<p<\infty$. There exist and angle $\sigma_{p,X}\in (0,\pi/2)$ and a constant $C_{p,X}\geq 0$, both depending only on $p$ and $X$, such that for every $d\geq 1$ the operator $A_d$ has a bounded $H^\infty(\Sigma_{\sigma_{p,X}})$-functional calculus on $L^p(\mathbb R^d,\gamma_d;X)$ with bound at most $C_{p,X}$, i.e.,
$$
 \| f(A_d) \| \le C_{p,X}\|f\|_\infty
$$
for all $f\in H^1(\Sigma_{\sigma_{p,x}})\cap H^\infty(\Sigma_{\sigma_{p,x}})$.
\end{theorem}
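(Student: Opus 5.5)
The argument combines three ingredients: dimension-free $R$-sectoriality of angle $<\pi/2$, the Hieber--Pr\"uss transference theorem, and the Kalton--Weis angle comparison principle, as the abstract indicates.

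\emph{Step 1: $R$-analyticity for the Walsh semigroup.} On $L^p(\D^n;X)$ consider the Walsh (heat) semigroup $W_n(t)$, which acts on Walsh functions by $w_S\mapsto e^{-t|S|}w_S$. The goal is to show that $W_n$ extends to an $R$-bounded analytic semigroup on a sector $\Sigma_{\eta}$, with $\eta=\eta_{p,X}>0$ and $R$-bounds independent of $n$. Equivalently, we want
\[
 \cR\{W_n(z): z\in\Sigma_\eta\}\le C_{p,X}, \qquad \cR\{tA^W_nW_n(t): t>0\}\le C_{p,X}.
\]
I would first establish the second bound, which is the core. Write $tA^W_nW_n(t)$ through the discrete derivatives $\partial_j$, using $A^W_n=\sum_j\partial_j^*\partial_j$. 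Then express the result with Rademacher-type decoupling, replacing the coordinate $x_j$ by $x_j r_j$ with Rademachers $r_j$. By the UMD property this gives a Pisier-type inequality together with $K$-convexity of $X$, since UMD implies finite cotype and $K$-convexity. This is the step where dimension independence must be won. Any $n$-dependence coming from Pisier's inequality has to be avoided, and the use of UMD-valued martingale transforms is what makes this possible. Passing from the family $\{W_n(t)\}$ to its $R$-boundedness is automatic, because the estimates are obtained in Rademacher norms from the start: we apply them to $L^p(\D^n;\Rad(X))$, which is again UMD. Standard sectorial theory then turns the bound on $tA W(t)$ into analyticity on a sector whose angle depends only on the constant.

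\emph{Step 2: central limit transfer.} Set $x\mapsto (x_{1}+\dots+x_{m})/\sqrt m$ blockwise, giving $d$ Gaussian coordinates from $n=dm$ signs. For smooth cylindrical $X$-valued (and $\Rad(X)$-valued) $f$ on $\R^d$, the composition $f\circ\Phi_m$ satisfies
\[
 W_{dm}(t)(f\circ\Phi_m)-(P_d(t)f)\circ\Phi_m\to 0
\]
in a suitable weak sense, and the CLT gives $\|f\circ\Phi_m\|_{L^p(\D^{dm})}\to\|f\|_{L^p(\gamma_d)}$. This transfers the dimension-free $R$-bounds for $\{tA W(t)\}$, and for the complex-time family as well, to $P_d$. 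By density we obtain that $A_d$ is $R$-sectorial of angle $\omega_R(A_d)\le \pi/2-\eta<\pi/2$, with constants depending only on $(p,X)$. This limit step requires care with complex times, since $P_d(z)$ for complex $z$ is not a Markov operator. I would handle it by transferring only the real-time bound $\cR\{tA_d P_d(t)\}$ and recovering the angle abstractly.

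\emph{Step 3: conclusion.} $P_d$ is a positive contraction semigroup on $L^p(\gamma_d)$, so Hieber--Pr\"uss gives a bounded $H^\infty(\Sigma_\nu)$-calculus of $A_d$ for some $\nu<\pi$, with bound depending only on $p,X,\nu$ and hence dimension-free. Kalton--Weis then states that if $A$ has a bounded $H^\infty$-calculus and is $R$-sectorial of angle $\omega_R$, then $\omega_{H^\infty}(A)=\omega_R(A)$. Its quantitative form bounds the calculus constant on $\Sigma_\sigma$, for any $\sigma>\omega_R$, in terms of the $H^\infty(\Sigma_\nu)$ bound and the $R$-sectoriality constants. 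Choosing $\sigma_{p,X}\in(\pi/2-\eta,\pi/2)$ completes the proof.

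The main obstacle is Step 1, namely getting Walsh $R$-analyticity with constants independent of $n$. For this I would follow the Lust-Piquard/Pisier-style decoupling through UMD martingale transforms.
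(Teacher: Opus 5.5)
Your Step~1 has a genuine gap, and it is exactly where the paper's main idea is needed.

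\emph{The $R$-boundedness claim is wrong.} You say that $R$-boundedness is ``automatic'' because you work on $L^p(\D^n;\Rad(X))$. But for any single operator $T$ on a space $E$, the map $\sum_j\varepsilon_jx_j\mapsto\sum_j\varepsilon_jTx_j$ is trivially bounded on $\Rad(E)$ with norm at most $\|T\|$. Moreover, $L^p(\D^n;\Rad(X))$ is just $\Rad(L^p(\D^n;X))$ up to Kahane--Khintchine constants. So a fixed-time estimate applied there only controls $\sum_j\varepsilon_j\,tA^W_nW_n(t)f_j$ with one common $t$. $R$-boundedness of $\{tA^W_nW_n(t):t>0\}$ is about $\sum_j\varepsilon_j\,t_jA^W_nW_n(t_j)f_j$ with different times. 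No collection of uniform norm bounds gives this.

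\emph{The dimension-free bound itself is not derived.} Rewriting $tA^W_nW_n(t)$ through the $\partial_j$ and appealing to decoupling, ``a Pisier-type inequality'' and martingale transforms does not produce an estimate that yields analyticity. Falling back on Pisier's classical norm analyticity for $K$-convex $X$ would not rescue Step~3 either: Kalton--Weis identifies $\omega_{H^\infty}$ with $\omega_R$, not with $\omega$. You also never say where the zeroth-order bound $\cR\{W_n(t):t\ge0\}$ comes from, and you need it in the Taylor argument that produces the sector.

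\emph{What the paper does instead.} It uses a Rademacher lift that turns the $R$-bound into an ordinary bound for a single operator.
\begin{enumerate}[(i)]
\item Fix ordered times $t_1\le\dots\le t_m$. On $Y_m^N=\Rad_m(L^p(\D^N;X))$ consider the diagonal semigroup $\mathsf S(s)\bigl(\sum_j\varepsilon_jf_j\bigr)=\sum_j\varepsilon_jT_N(st_j)f_j$. Its negative generator $\mathsf A$ acts as $t_j\Delta_N$ in the $j$th coordinate, so the desired $R$-bound is exactly a uniform bound on $\|\mathsf A\mathsf S(1)\|$.
\item Write $T_N(t)=\E_uM_{S_{e^{-t}}(u)}$, using the \emph{same} uniform random vector $u$ in all Rademacher coordinates. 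This exhibits $\mathsf S(s)$ as a finite convex combination of commuting projections $\mathsf P_u(s)$. Each $\mathsf P_u(s)$ acts by conditional expectations along a decreasing filtration; this is where the ordering of the times and the common coupling matter.
\item Stein's inequality then bounds all products of these projections uniformly. In particular it gives the real-time $R$-bound for $\{T_N(t)\}$.
\item After renorming so that the projections become contractive, Pisier's lemma, using the uniform type of $Y_m^N$, gives $\|(I+\mathsf S(s))y\|\ge c_{p,X}\|y\|$.
\item Pisier's $e^{i\xi s}$ argument turns this lower bound into uniform analyticity of $\mathsf S$, hence $\|\mathsf A\mathsf S(1)\|\le C_{p,X}$.
\end{enumerate}

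Your Steps~2 and~3 are reasonable in outline. In Step~2 the paper lifts Hermite polynomials by exactly degree-preserving Walsh polynomials $\Phi_{\alpha,M}$ rather than by $f\circ\Phi_m$. In Step~3 the paper passes to the infinite product $E_{\mathbb N}$ and compresses via $f(A_d)=Q_df(A_{\mathbb N})J_d$, so only the qualitative Kalton--Weis theorem is needed; your per-dimension quantitative version is also workable. But without a replacement for the Rademacher lift, Step~1 fails, and with it the whole argument.
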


Strict-angle results (asserting an angle strictly less than $\pi/2$) for the $H^\infty$-calculus were previously known under an interpolation assumption on the target space. Hyt\"onen \cite[Corollary~9.10]{HytonenLPS} proved that if $X$ is isomorphic to a closed subspace of $[H,Y]_\theta$, where $H$ is a Hilbert space and $Y$ is UMD, then the negative generator of any symmetric diffusion semigroup on $L^p(\mu;X)$ has a bounded $H^\infty$-calculus for every angle larger than
$$
 (1-\theta)\pi\Bigl|\frac1p-\frac12\Bigr| + \theta\frac{\pi}{2}.
$$
Under the same assumptions on $X$, Xu \cite[Theorem~4 and its proof]{Xu} subsequently obtained a strict-angle calculus for analytic semigroups of regular contractions. In particular, these results apply to UMD Banach lattices $X$. The result proved here is specific to the Ornstein--Uhlenbeck family, but removes the interpolation assumption: $X$ may be an arbitrary UMD space. Related preservation-of-analyticity results for tensorised semigroups were obtained by Arhancet \cite{ArhancetNC, ArhancetPisier, ArhancetBochner}; see also the dilation results in \cite{ArhancetDilation, AFLM}. These works cover many important classes of Banach spaces, but not arbitrary UMD spaces $X$ in the dimension-free form considered here.

The proof of Theorem~\ref{thm:main-Hinfty} starts by proving dimension-free $R$-bounds for the semigroup $(e^{-t\Delta_N})_{t\geq 0}$ and for its derivative family $(t\Delta_Ne^{-t\Delta_N})_{t>0}$. These two estimates combined give dimension-free $R$-analyticity of
the Walsh semigroup, and hence dimension-free strict-angle $R$-sectoriality of $\Delta_N$. The two real-time $R$-bounds are then transferred to the Ornstein--Uhlenbeck setting by a central limit argument, which allows the passage from suitably scaled Walsh functions to Hermite polynomials, and the same $R$-analyticity argument can be applied subsequently. Theorem~\ref{thm:main-Hinfty} is obtained by passing to the infinite product, applying the Hieber--Pr\"uss theorem to obtain a bounded $H^\infty(\Sigma_\nu)$-calculus for every $\nu>\pi/2$, and using the Kalton--Weis theorem on equality of the optimal $H^\infty$- and $R$-sectorial angles to reduce the angle below $\pi/2$.

We conclude the paper by giving a short proof of the result, due to Betancor, Castro, Curbelo and Rodr\'iguez-Mesa \cite{BetancorEtAl}, that the UMD property of $X$ is necessary for the boundedness of the $H^\infty$-calculus of $A_d$ on $L^p(\mathbb R^d,\gamma_d;X)$, for any fixed dimension $d\geq 1$ and exponent $1<p<\infty$.

Related questions concerning the underlying Banach space property and the optimal angle are considered in the companion papers \cite{Neerven-Stein,Neerven-Rsectorial}. In \cite{Neerven-Stein} we identify and study the Banach space properties needed for the dimension-free $R$-sectoriality of vector-valued Ornstein--Uhlenbeck operators, and in \cite{Neerven-Rsectorial} we study the optimal angle problem.

\section{Preliminaries}
\label{sec:preliminaries}

In this section we collect the notions from Banach space theory and operator theory used below. Terminology and notation are standard and follow \cite{HNVW1,HNVW2}. These works cover the various Banach space properties that play a role in this paper, among them the UMD property, $K$-convexity, type and cotype, and the triangular contraction property, as well as the theory of $R$-bounded families of operators, sectorial operators, and the $H^\infty$-calculus.

As in \cite{HNVW1,HNVW2} (cf. \cite[Section~3.2.b]{HNVW1} and \cite[Proposition~6.1.9]{HNVW2}), the meaning of a \emph{Rademacher sequence} depends on the scalar field: over the real scalars the random variables take the values $-1$ and $1$ with equal probability; over the complex scalars they are taken to be uniformly distributed on the unit circle in the complex plane. Thus, in the complex setting used here, a Rademacher sequence is what is also commonly called a \emph{Steinhaus sequence}. We use $\varepsilon_j$ for Rademacher variables defined in this sense.

\subsection{Rademacher spaces, \texorpdfstring{$K$}{K}-convexity, type, and the UMD property}

Let $(\varepsilon_j)_{j\ge 1}$ be a Rademacher sequence, i.e., a sequence of independent Rademacher variables, on a probability space $(\Omega_\varepsilon, \mathbb P_\varepsilon)$.
For a Banach space $X$, we denote by $\Rad(X)$ the closure in $L^2(\Omega_\varepsilon;X)$ of the finite Rademacher sums
$$
 \sum_{j=1}^n\varepsilon_jx_j, \qquad x_1,\ldots,x_n\in X.
$$
Thus
$$
 \Bigl\|\sum_{j=1}^n\varepsilon_jx_j\Bigr\|_{\Rad(X)}
 = \Bigl( \mathbb E_\varepsilon \Bigl\|\sum_{j=1}^n\varepsilon_jx_j\Bigr\|^2 \Bigr)^{1/2}.
$$
We write $\Rad_m(X)$ for the subspace consisting of sums involving only $\varepsilon_1,\ldots,\varepsilon_m$. By the Kahane--Khintchine inequalities, the exponent $2$ in this definition may be replaced by any exponent $q\in(0,\infty)$, at the expense of an equivalent norm with equivalence constants depending only on $q$.

Let $1\leq r\leq 2$. The space $X$ is said to have \emph{type $r$} if there exists a constant $\tau_{r,X}\ge 0$ such that, for all finite sequences $x_1,\ldots,x_n\in X$,
$$
 \Bigl( \mathbb E_\varepsilon \Bigl\|\sum_{j=1}^n\varepsilon_jx_j\Bigr\|^2\Bigr)^{1/2}
  \leq \tau_{r,X} \Bigl(\sum_{j=1}^n\|x_j\|^r\Bigr)^{1/r}.
 $$
The least admissible constant is denoted by $\tau_{r,X}$. Every Banach space has type $1$, Hilbert spaces have type $2$, and $L^p$-spaces have type $\min\{p,2\}$. We say that $X$ has \emph{non-trivial type} if it has type $r$ for some $1<r\le 2$.

A Banach space $X$ is called \emph{$K$-convex} if the operators
$$
 f\mapsto \sum_{j=1}^n \varepsilon_j\mathbb E_\varepsilon(\overline{\varepsilon_j}f),
 \qquad n\geq 1,
$$
are uniformly bounded on $L^2(\Omega_\varepsilon;X)$. By a theorem of Pisier \cite{Pisier}, $K$-convexity is equivalent to having non-trivial type. From this, in turn, one derives that if $X$ is $K$-convex, so are the spaces $L^p(S;X)$ and $\Rad_m(L^p(S;X))$ for $1<p<\infty$, with constants independent of $S$ and $m$.

A Banach space $X$ is a \emph{UMD space} if for some $1<p<\infty$ (equivalently, for all $1<p<\infty$) there exists a constant $\beta_{p,X}\ge 0$ such that for every finite $X$-valued martingale difference sequence $(d_j)_{j=1}^n$ in $L^p$ and every choice of complex scalars $\epsilon_j$ satisfying $|\epsilon_j|=1$, $j=1,\dots,n$,
$$
 \Bigl\|\sum_{j=1}^n\epsilon_jd_j\Bigr\|_{L^p(X)}
 \leq \beta_{p,X} \Bigl\|\sum_{j=1}^nd_j\Bigr\|_{L^p(X)}.
$$
The least admissible constant is denoted by $\beta_{p,X}$ and is called the UMD constant of $X$ at exponent $p$. Every UMD space is $K$-convex, hence by Pisier's theorem it has non-trivial type $r_X>1$. Fix $1<r<\min\{p,r_X\}$. By \cite[Proposition 7.1.4]{HNVW2} and its proof, the spaces $L^p(S;X)$ have type $r$, with a type constant depending only on $p$, $r$, and $X$, and not on the underlying measure space $S$. It follows that
$$
 L^2(\Omega_\varepsilon;L^p(S;X))
$$
has type $r$ with the same dependencies of the constant. Hence the subspaces
$$
 \Rad_m(L^p(S;X)),\qquad m\geq 1,
$$
have type $r$ with constants independent of $m$ and $S$. If $X$ is a UMD space, then so are the spaces $L^p(S;X)$, $1<p<\infty$, and $\Rad_m(X)$, $m\geq 1$.

\subsection{\texorpdfstring{$R$}{R}-bounded families of operators}

Let $E$ and $F$ be Banach spaces. A family $\mathscr T\subseteq\calL(E,F)$ is called \emph{$R$-bounded} if there exists a constant $C\ge 0$ such that
$$
 \Bigl( \mathbb E_\varepsilon \Bigl\| \sum_{j=1}^n\varepsilon_jT_jx_j \Bigr\|_F^2 \Bigr)^{1/2}
 \leq
 C \Bigl( \mathbb E_\varepsilon \Bigl\| \sum_{j=1}^n\varepsilon_jx_j \Bigr\|_E^2 \Bigr)^{1/2}
$$
for all finite choices $T_1,\ldots,T_n\in\mathscr T$ and $x_1,\ldots,x_n\in E$. The least admissible constant is denoted by $\cR_{E,F}(\mathscr T)$, or simply by $\cR_E(\mathscr T)$ when $E=F$. By the Kahane--Khintchine inequalities, the exponent $2$ in this definition may be replaced by any exponent $q\in(0,\infty)$; this gives an equivalent definition, with comparison constants depending only on $q$.

In the language of Rademacher spaces, $R$-boundedness says that for all finite choices $T_1,\ldots,T_m\in\mathscr T$ the diagonal operator
$$
 \sum_{j=1}^m\varepsilon_jx_j \mapsto \sum_{j=1}^m\varepsilon_jT_jx_j
$$
is bounded from $\Rad_m(E)$ to $\Rad_m(F)$, uniformly in $m$ and in the choice of the operators $T_j\in\mathscr T$. Every $R$-bounded family is uniformly bounded. On Hilbert spaces the converse holds as well, and in fact the validity of this converse characterises Hilbert spaces isomorphically within the class of Banach spaces.

A consequence of the UMD property that will be used repeatedly is Stein's inequality for conditional expectations; see \cite[Section~4.2.d]{HNVW1}. Let $X$ be a UMD space, let $(\Omega,\mathcal F,\mu)$ be a measure space, let $(\mathcal F_n)_{n=1}^m$ be a finite filtration, and let $f_1,\ldots,f_m\in L^p(\Omega;X)$ be given functions. Then Stein's inequality reads
\begin{equation}
\label{eq:Stein-inequality}
 \Bigl\| \sum_{n=1}^m \varepsilon_n\mathbb E(f_n | \mathcal F_n)
 \Bigr\|_{L^p(\Omega\times\Omega_\varepsilon;X)}
 \leq \beta_{p,X}
 \Bigl\| \sum_{n=1}^m\varepsilon_nf_n \Bigr\|_{L^p(\Omega\times\Omega_\varepsilon;X)},
\end{equation}
where, as before, $(\varepsilon_n)_{n=1}^m$ is a Rademacher sequence on an auxiliary probability space $(\Omega_\varepsilon, \mathbb P_{\varepsilon})$. In particular, the constant is independent of $m$.

By reversing the indexing, the same estimate applies to a finite decreasing filtration. Thus, the conditional expectations associated with any finite nested family of $\sigma$-algebras form an $R$-bounded family on $L^p(\Omega;X)$, with $R$-bound depending only on $p$ and $X$.

\subsection{Sectorial and \texorpdfstring{$R$}{R}-sectorial operators}

For $0<\theta<\pi$ we consider the open sector
$$
 \Sigma_\theta := \{z\in\mathbb C\setminus\{0\}:\,|\arg z|<\theta\},
$$
with arguments taken in $(-\pi,\pi)$.
Let $A$ be a closed linear operator with domain $\mathsf D(A)$ on a Banach space $E$. Such an operator is called \emph{sectorial of angle} $\omega\in[0,\pi)$ if
$$
 \sigma(A)\subseteq\overline{\Sigma_\omega}
$$
and, for every $\theta\in(\omega,\pi)$,
$$
 \sup_{\lambda\notin\overline{\Sigma_\theta}} \|\lambda(\lambda-A)^{-1}\|<\infty.
$$
The infimum of all admissible angles is denoted by $\omega(A)$ and is called the \emph{sectorial angle} of $A$.

If $A$ is densely defined and sectorial with $\omega(A)<\pi/2$, then $-A$ generates a bounded analytic $C_0$-semigroup; conversely, if $-A$ generates a bounded analytic $C_0$-semigroup, then $A$ is densely defined and sectorial of angle strictly smaller than $\pi/2$.

The operator $A$ is called \emph{$R$-sectorial of angle} $\omega$ if, in addition, for every $\theta\in(\omega,\pi)$ the family
$$
 \bigl\{ \lambda(\lambda-A)^{-1}:\, \lambda\notin\overline{\Sigma_\theta} \bigr\}
$$
is $R$-bounded. The infimum of all angles such that this additional property holds is denoted by $\omega_R(A)$ and is called the {\em angle of $R$-sectoriality} of $A$. Since $R$-boundedness implies uniform boundedness, it is clear that
$$
 \omega(A)\leq \omega_R(A).
$$
For the semigroups considered in this paper, $R$-sectoriality of angle strictly smaller than $\pi/2$ is equivalently expressed by the existence of an $R$-bounded analytic extension to a non-zero sector; we refer to this property as \emph{$R$-analyticity} of the semigroup.

\subsection{The bounded \texorpdfstring{$H^\infty$}{H-infinity}
functional calculus}

Let $0<\sigma<\pi$. Following \cite[Chapter 10]{HNVW2}, we denote by $H^1(\Sigma_\sigma)$ the space of all holomorphic functions $f:\Sigma_\sigma\to\mathbb C$ for which
$$
 \| f\|_{H^1(\Sigma_\sigma)}
 := \sup_{|\nu|<\sigma} \int_0^\infty |f(e^{i\nu}t)|\,\frac{\dd t}{t}  <\infty.
$$
As usual, $H^\infty(\Sigma_\sigma)$ denotes the Banach algebra of all bounded holomorphic functions on $\Sigma_\sigma$, equipped with the supremum norm.

Let $A$ be a sectorial operator on $E$, with angle of sectoriality
$\omega(A)$. Fix $\omega(A)<\sigma<\pi$. For $f\in H^1(\Sigma_\sigma)$ one defines
$$
 f(A) := \frac{1}{2\pi i} \int_{\partial\Sigma_\nu} f(z)(z-A)^{-1}\dd z,
 \qquad \omega(A)<\nu<\sigma.
$$
The integral converges absolutely in $\mathscr L(E)$ and is independent of the choice of $\nu\in (\omega(A),\sigma)$. Indeed, the sectorial resolvent estimate gives
$$
 \|f(A)\| \lesssim
 \int_{\partial\Sigma_\nu} |f(z)|\,\frac{|\dd z|}{|z|} \lesssim \|f\|_{H^1(\Sigma_\sigma)},
$$
and independence of $\nu$ follows from Cauchy's theorem. The mapping $f\mapsto f(A)$ is called the \emph{Dunford calculus} associated with $A$.

The operator $A$ is said to have a \emph{bounded $H^\infty(\Sigma_\sigma)$-calculus} if there exists a constant $C_\sigma\ge 0$ such that
$$
 \|f(A)\| \leq C_\sigma\|f\|_{H^\infty(\Sigma_\sigma)},
 \qquad f\in H^1(\Sigma_\sigma)\cap H^\infty(\Sigma_\sigma).
$$
In this case, the optimal angle is defined as
$$
 \omega_{H^\infty}(A) :=
 \inf\bigl\{ \sigma>\omega(A):\, A\text{ has a bounded }H^\infty(\Sigma_\sigma)\text{-calculus} \bigr\},
$$
and is called the {\em $H^\infty$-angle} of $A$.
In particular,
$$
 \omega(A)\leq \omega_{H^\infty}(A).
$$

In the proof of Theorem~\ref{thm:main-Hinfty} we use the following result of Kalton and Weis \cite{KaltonWeis} in the form as stated in \cite{HNVW2}: if $A$ is a densely defined sectorial operator on a UMD space with a bounded $H^\infty$-calculus, then $A$ is $R$-sectorial and
$$
 \omega_R(A) = \omega_{H^\infty}(A).
$$
Thus, once a bounded $H^\infty$-calculus (with a possibly bad angle) has been obtained (e.g., by the Hieber--Pr\"uss transference theorem), a strict estimate $\omega_R(A)<{\pi}/{2}$ immediately gives $\omega_{H^\infty}(A)<{\pi}/{2}$.

\section{Lifting the Walsh semigroups}
\label{sec:coupling}

We begin by proving a useful representation for lifts of the Walsh semigroups to Rademacher spaces. The arguments in this section and the next two may be viewed as a Rademacher lift of Pisier's technique \cite{Pisier} for proving dimension-free analyticity of the Walsh semigroups.

Throughout Sections~\ref{sec:coupling}--\ref{sec:Walsh-R-analyticity},  $X$ is a UMD space and we assume $1<p<\infty$.
For $N\geq 1$ and $A\subseteq\{1,\ldots,N\}$, put
$$
 w_A(x):=\prod_{i\in A}x_i, \qquad x\in\D^N,
$$
with $w_\varnothing={\bf 1}$, the constant-one function.
We will always assume that $\D^N$ carries the uniform product probability measure. The {\em Walsh number operator} $\Delta_N$ and its associated semigroup $T_N$ are defined by
$$
 \Delta_Nw_A=|A|w_A, \qquad T_N(t):=e^{-t\Delta_N},  \qquad t\geq 0.
$$
Lifting these objects to the Rademacher spaces
$$
 Y_m^N:=\Rad_m(L^p(\D^N;X))
$$
turns an $R$-boundedness estimate into an ordinary estimate for a diagonal semigroup.
The identity \eqref{eq:diagonal-convex-combination} below represents the lifted diagonal semigroup as an average of commuting projections. Stein's inequality controls their products, while the non-trivial type of $Y_m^N$ enters through Pisier's lemma.

Fixing $N\ge 1$, for subsets $S\subseteq\{1,\ldots,N\}$ let $\mathcal F_S:=\sigma(x_i:\,i\in S)$ be the sub-$\sigma$-algebra of $\D^N$ generated by the coordinate maps $x_i:\D^N\to\{-1,1\}$ with $i\in S$ and consider the conditional expectation
$$
 M_S := \mathbb E(\,\cdot\,\vert\mathcal F_S).
$$
Let $u_1,\ldots,u_N$ be independent random variables, uniformly distributed on $[0,1]$, and set
$$
 S_\rho(u):=\{i:\,u_i\leq \rho\},
$$
where $0\leq \rho\leq 1$. Denoting by $\E_u$ the expectation with respect to $u=(u_1,\ldots,u_N)$, we claim that
\begin{equation}
\label{eq:restriction}
 T_N(t)=\E_u M_{S_{e^{-t}}(u)}, \qquad t\ge 0.
\end{equation}
To verify \eqref{eq:restriction}, note that for a Walsh function $w_A$ one has
$$
 M_Sw_A=
 \begin{cases}
  w_A, & A\subseteq S,\\
  0,   & A\not\subseteq S.
 \end{cases}
$$
In fact, if $A\not\subseteq S$, then at least one of the factors $x_i$, $i\in A$, is averaged out by the conditional expectation and has mean zero. Consequently,
$$
 \E_u M_{S_\rho(u)}w_A = \mathbb P\bigl(A\subseteq S_\rho(u)\bigr)w_A.
$$
By independence of the variables $u_i$,
$$
 \mathbb P\bigl(A\subseteq S_\rho(u)\bigr) = \mathbb P(u_i\leq \rho\text{ for all }i\in A) = \rho^{|A|}.
$$
Taking $\rho=e^{-t}$ gives
$$
 \E_u M_{S_{e^{-t}}(u)}w_A = e^{-t|A|}w_A = T_N(t)w_A.
$$
Since the Walsh functions $w_A$ span all functions on $\D^N$, this proves
\eqref{eq:restriction}.

Fix a finite family of times $0<t_1\leq t_2\leq \cdots\leq t_m$ and an integer $m\geq 1$, and define
$$
 \mathsf S(s)\Big(\sum_{j=1}^m\varepsilon_jf_j\Big) := \sum_{j=1}^m\varepsilon_jT_N(st_j)f_j, \qquad s\ge 0.
$$
This defines a $C_0$-semigroup $\mathsf S = (\mathsf S(s))_{s\ge 0}$ on the Rademacher space $Y^N_m$. Its generator is the operator $-\mathsf A$ given by
\begin{equation}
\label{eq:diagonal-generator}
 \mathsf A\Big(\sum_j\varepsilon_jf_j\Big) =\sum_j\varepsilon_jt_j\Delta_Nf_j.
\end{equation}
Note that $\mathsf S$ and its generator depend on the times $t_1, \cdots, t_m$, although this is not expressed in the notation.

For fixed $s>0$ and $u=(u_1,\ldots,u_N)\in[0,1]^N$, set
\begin{equation}
\label{eq:Pu}
 \mathsf P_u(s)\Big(\sum_j\varepsilon_jf_j\Big) :=\sum_j\varepsilon_jM_{S_{e^{-st_j}}(u)}f_j.
\end{equation}
In view of $0<t_1\leq t_2\leq \cdots\leq t_m$ we have
$$
 S_{e^{-st_1}}(u)\supseteq S_{e^{-st_2}}(u) \supseteq\cdots\supseteq S_{e^{-st_m}}(u).
$$
Define the $\sigma$-algebras
$$
 \mathcal F_j^{u,s} := \mathcal F_{S_{e^{-st_j}}(u)} = \sigma\bigl(x_i:\,i\in S_{e^{-st_j}}(u)\bigr), \qquad j=1,\ldots,m.
$$
They satisfy
$$
 \mathcal F_1^{u,s}\supseteq\cdots\supseteq \mathcal F_m^{u,s}.
$$
Hence $(\mathcal F_j^{u,s})_{j=1}^m$ is a decreasing filtration on $\D^N$, and by definition we have
$$
 M_{S_{e^{-st_j}}(u)} = \mathbb E(\,\cdot\,\vert\mathcal F_j^{u,s}).
$$
We may therefore apply Stein's inequality \eqref{eq:Stein-inequality} (and use the Kahane--Khintchine inequalities to switch to $p$-th Rademacher moments), to the $X$-valued functions $f_1,\ldots,f_m$ on $\D^N$ and the decreasing filtration $(\mathcal F_j^{u,s})_{j=1}^m$. Since the constant in Stein's inequality is independent of the length $m$ of the filtration and the Kahane--Khintchine constants are independent of $m$, this gives
\begin{equation}
\label{eq:Stein-Pu}
 \|\mathsf P_u(s)\|_{\calL(Y^N_m)}  \leq K_{p,X},
\end{equation}
where $K_{p,X}$ is a constant independent of $m$, $N$, $u$, $s$, and the times $t_1,\ldots,t_m$. The same bound holds for arbitrary products of the projections. Indeed, the coordinate conditional expectations satisfy
\begin{align}\label{eq:coordinate-conditional-expectations}
 M_SM_R=M_{S\cap R}, \qquad S,R\subseteq\{1,\ldots,N\}.
\end{align}
Indeed, conditioning successively on the coordinates in $R$ and then on those in $S$ leaves precisely the coordinates belonging to both sets. In particular, these conditional expectations commute. It follows from \eqref{eq:coordinate-conditional-expectations} that, for every fixed $s>0$, the family $\{\mathsf P_u(s):\,u\in[0,1]^N\}$ consists of commuting projections.

Now fix $s>0$ and let $u^{(1)},\ldots,u^{(q)}\in[0,1]^N$ be arbitrary. Since each $\mathsf P_{u^{(\ell)}}(s)$ acts diagonally on the Rademacher coordinates, repeated application of \eqref{eq:Pu} gives
\begin{align*}
 \mathsf P_{u^{(1)}}(s)\cdots\mathsf P_{u^{(q)}}(s)
 \Bigl(\sum_j\varepsilon_jf_j\Bigr) =\sum_j\varepsilon_jM_{R_j}f_j,
\end{align*}
where $ R_j := \bigcap_{\ell=1}^q S_{e^{-st_j}}(u^{(\ell)})$.

For every fixed $1\le \ell\le q$, the sets $S_{e^{-st_j}}(u^{(\ell)})$ decrease with $j$. Hence their intersections satisfy $R_1\supseteq R_2\supseteq\cdots\supseteq R_m$, and putting
$$
 \mathcal G_j:=\mathcal F_{R_j} =\sigma(x_i:\,i\in R_j),  \qquad j=1,\ldots,m,
$$
we have $\mathcal G_1\supseteq\cdots\supseteq\mathcal G_m$, and $M_{R_j}=\mathbb E(\,\cdot\,\vert\mathcal G_j)$. Thus Stein's inequality \eqref{eq:Stein-inequality}, again in its decreasing-filtration form, gives
\begin{equation}
\label{eq:product-Pu}
\sup_{q\geq 1}\sup_{u^{(1)},\ldots,u^{(q)}} \|\mathsf P_{u^{(1)}}(s)\cdots\mathsf P_{u^{(q)}}(s)\| \leq K_{p,X}.
\end{equation}

Averaging \eqref{eq:Pu} with respect to $u$ and using \eqref{eq:restriction} in each Rademacher coordinate, we obtain, for $\sum_{j=1}^m\varepsilon_jf_j\in Y^N_m$,
\begin{align*}
 \E_u\mathsf P_u(s) \Bigl(\sum_{j=1}^m\varepsilon_jf_j\Bigr)
 = \sum_{j=1}^m\varepsilon_j \E_u M_{S_{e^{-st_j}}(u)}f_j
 = \sum_{j=1}^m\varepsilon_jT_N(st_j)f_j= \mathsf S(s) \Bigl(\sum_{j=1}^m\varepsilon_jf_j\Bigr).
\end{align*}
Hence
\begin{equation}\label{eq:diagonal-convex-combination}
 \mathsf S(s)=\E_u\mathsf P_u(s).
\end{equation}

For fixed $s$ and $t_1,\ldots,t_m$, the operator $\mathsf P_u(s)$ takes only finitely many values as $u$ ranges over $[0,1]^N$. Indeed, putting $\rho_j=e^{-st_j}$, we have $\rho_1\geq \cdots\geq \rho_m$, and for each coordinate $i\in\{1,\ldots,N\}$ the membership pattern
$$
 \bigl({\bf 1}_{\{i\in S_{\rho_j}(u)\}}\bigr)_{j=1}^m
 = \bigl({\bf 1}_{\{u_i\leq \rho_j\}}\bigr)_{j=1}^m
$$
is of the form $(1,\ldots,1,0,\ldots,0)$. Thus there are at most $m+1$ possible membership patterns for each $i$, and hence at most $(m+1)^N$ possible tuples $(S_{\rho_1}(u),\ldots,S_{\rho_m}(u))$. By \eqref{eq:Pu}, $\mathsf P_u(s)$ is completely determined by this tuple. As a consequence, \eqref{eq:diagonal-convex-combination} is a finite convex combination of commuting projections.

\section{Estimates for the lifted Walsh semigroup}
\label{sec:lower-estimate}

The representation \eqref{eq:diagonal-convex-combination} gives two estimates for the lifted semigroup $\mathsf S$ which will be used separately below. The lower estimate rests on the following lemma of Pisier \cite{Pisier}; we use the formulation in \cite[Lemma~7.4.25]{HNVW2}.

\begin{lemma}[Pisier]
\label{lem:lower}
Let $X$ be a Banach space of type $r\in(1,2]$, let $r'$ be the conjugate exponent, and let $\tau_{r,X}$ be the type-$r$ constant of $X$. If $P_1,\ldots,P_M\in\calL(X)$ are commuting contractive projections and
$$
 S=\sum_{m=1}^M\lambda_mP_m
$$
is a convex combination, then
$$
 \|Sx+x\|\geq \frac14(\pi\tau_{r,X})^{-r'}\|x\|.
$$
\end{lemma}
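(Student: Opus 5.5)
The plan is to invert $I+S$ through the joint spectral decomposition of the commuting projections, and to use type $r$ only to control one analytic family of operators. For $\eta\in\{0,1\}^M$ put
$$Q_\eta:=\prod_{m=1}^M \bigl(\eta_m P_m+(1-\eta_m)(I-P_m)\bigr).$$
Since the $P_m$ commute, the $Q_\eta$ are commuting idempotents with $\sum_\eta Q_\eta=I$ and $Q_\eta Q_{\eta'}=0$ for $\eta\ne\eta'$. They satisfy $P_mQ_\eta=\eta_mQ_\eta$, so that
$$SQ_\eta=\lambda(\eta)Q_\eta,\qquad \lambda(\eta):=\sum_m\lambda_m\eta_m\in[0,1].$$
Consequently $I+S$ acts on the range of $Q_\eta$ as multiplication by $1+\lambda(\eta)\in[1,2]$. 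It is therefore invertible, with
$$(I+S)^{-1}=\sum_\eta \frac{1}{1+\lambda(\eta)}\,Q_\eta.$$
The task is to bound $\|(I+S)^{-1}\|\le 4(\pi\tau_{r,X})^{r'}$. This cannot be read off from the decomposition, since $\sum_\eta Q_\eta$ is not unconditional in a general Banach space.

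The route I would take is Pisier's holomorphic semigroup argument. Consider $e^{-zS}=\sum_\eta e^{-z\lambda(\eta)}Q_\eta$, an entire operator-valued function of $z$. For real $z=t\ge0$ it is controlled by a random product representation analogous to \eqref{eq:diagonal-convex-combination}: writing $S=\E_\iota P_\iota$ with $\iota$ chosen with probabilities $\lambda_m$, $e^{-tS}$ is an average of products of the $P_m$ and $I-P_m$. On the imaginary axis the type-$r$ assumption enters. On the "degree $k$" piece of a Rademacher decomposition, the operators behave like $e^{i\theta k}$, and type $r$ gives a polynomial bound for the norm in $k$ of order $(\pi\tau_{r,X})^{r'}$. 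Interpolating between these two regimes (Phragm\'en--Lindel\"of on a sector, or a Bernstein-type inequality on a finite-degree polynomial in $e^{i\theta}$) should yield
$$\sup_{\operatorname{Re} z\ge0}\|e^{-zS}\|\lesssim(\pi\tau_{r,X})^{r'}.$$
In fact, only a bound on a neighbourhood of the segment connecting $0$ to $-1$ in the spectral variable is needed.

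With that bound in hand, I would write $(I+S)^{-1}=\int_0^\infty e^{-t}e^{-tS}\dd t$, or use a contour-integral representation of $f(S)$ with $f(w)=1/(1+w)$ holomorphic on a neighbourhood of $[0,1]$. The numerical factor $\tfrac14$ would come from the length of the contour and $\sup|f|\le1$ on it, up to a factor $2$ or $4$ from passing between $P$ and $I-P$. This gives $\|x\|\le 4(\pi\tau_{r,X})^{r'}\|(I+S)x\|$.

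The main obstacle is the middle step: proving a dimension-free bound, independent of $M$, for $e^{i\theta S}$ or its substitute. Here the type-$r$ hypothesis is what makes the argument work in place of the Hilbert space identity $\|\sum_\eta Q_\eta x\|^2=\sum_\eta\|Q_\eta x\|^2$. I would first try to reproduce Pisier's proof of the analyticity of the Walsh semigroup on $K$-convex spaces, as in \cite[Lemma~7.4.25]{HNVW2}, in this abstract commuting-projection setting. The key point would be that the Boolean algebra generated by $P_1,\ldots,P_M$ gives a contractive representation of the relevant Walsh-type structure, so that Pisier's type-$r$ estimate for the Walsh semigroup transfers verbatim.
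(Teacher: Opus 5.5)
Your joint spectral decomposition is correct, but the estimate your argument rests on, $\sup_{\operatorname{Re}z\ge0}\|e^{-zS}\|\lesssim(\pi\tau_{r,X})^{r'}$, is false. It already fails on the imaginary axis and already for scalar $L^p$-spaces. Take the space $L^p(\D^N)$ with $1<p<\infty$, $p\neq2$; it has type $\min\{p,2\}$ with constant independent of $N$. Take the commuting conditional expectations $P_m=M_{\{1,\dots,N\}\setminus\{m\}}$ and $\lambda_m=1/N$. Then $S=I-\frac1N\Delta_N$, so for $\theta=N\pi/2$ the operator $e^{i\theta S}$ is a unimodular multiple of the multiplier $w_A\mapsto(-i)^{|A|}w_A$. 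Testing on $\prod_m(1+w_{\{m\}})$ if $p<2$, or on $\prod_m(1+iw_{\{m\}})$ if $p>2$, gives $\|e^{i\theta S}\|\ge 2^{N|1/p-1/2|}$. Abstract commuting projections carry no ``degree-$k$'' Rademacher structure. The Boolean algebra they generate is not contractive either: only products of the $P_m$ are, and $\|I-P_m\|$ can be $2$. So there is nothing for type to act on in this step. Transferring Pisier's Walsh analyticity argument does not rescue it, because that argument (like Proposition~\ref{prop:lifted-Walsh-estimates} and Theorem~\ref{thm:Walsh-real-derivative-Rbound} here) \emph{uses} this very lemma, so the plan is circular.

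Your final step actually needs only the real-axis bound, since $(I+S)^{-1}=\int_0^\infty e^{-t}e^{-tS}\dd t$ gives $\|(I+S)^{-1}\|\le\sup_{t\ge0}\|e^{-tS}\|$. All the difficulty therefore sits there, and your justification for it is type-free and gives nothing uniform. The identity $e^{-tS}=\prod_m\bigl(e^{-t\lambda_m}I+(1-e^{-t\lambda_m})(I-P_m)\bigr)$ exhibits $e^{-tS}$ as an average of products $\prod_{m\in B}(I-P_m)$, and these can have norm $2^{|B|}$. For example, on $\ell^\infty(\{1,2\}^M)$, with $P_m$ freezing the $m$-th coordinate at $1$ and $\lambda_m=1/M$, one has $e^{-tS}\to\prod_m(I-P_m)$, which has norm $2^M$.

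The missing idea is to use type directly, to rule out an approximate $\ell^1_n$. Suppose $\|x\|=\|x^*\|=x^*(x)=1$ and $\|x+Sx\|\le\delta$; then $|x^*(S^kx)-(-1)^k|\le k\delta$. Draw $\Lambda_1,\dots,\Lambda_n$ independently with law $(\lambda_m)$, and put $z_j=\tfrac12(I-P_{\Lambda_j})x$ and $\Pi_A=\prod_{k\in A}P_{\Lambda_k}$. Commutativity and idempotence give $\Pi_Az_j=0$ for $j\in A$. For $j\notin A$, the number $\mathbb E\,x^*(\Pi_Az_j)=\tfrac12x^*\bigl((S^{|A|}-S^{|A|+1})x\bigr)$ differs from $(-1)^{|A|}$ by at most $n\delta$. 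Now use the classical fact that $\max_B|\sum_{j\in B}a_j|\ge\frac1\pi\sum_j|a_j|$ for complex $a_j$ to choose $A$. This gives $\mathbb E_\Lambda\|\sum_j\varepsilon_jz_j\|\ge(1-n\delta)\frac n\pi$ for all unimodular $\varepsilon_j$. On the other hand, type $r$ and $\|z_j\|\le1$ bound the Rademacher average by $\tau_{r,X}n^{1/r}$. Taking $n$ of order $(\pi\tau_{r,X})^{r'}$ forces $\delta\gtrsim_r(\pi\tau_{r,X})^{-r'}$. This is where the form of the constant comes from, and no holomorphic functional calculus of $S$ is involved.
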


As before we fix $m\ge 1$ and let $Y_m^N=\Rad_m(L^p(\D^N;X))$, where $X$ is a UMD space and $1<p<\infty$. Fixing times $0<t_1\le \dots \le t_m$ we define the diagonal semigroup $\mathsf S$ on $Y_m^N$ as before.

\begin{proposition}[Uniform estimates for the lifted semigroup]
\label{prop:lifted-Walsh-estimates}
Under these assumptions there are constants $c_{p,X}>0$ and $K_{p,X}\ge 0$ such that, for all $N\ge 1$, $m\geq1$, and all $s>0$, we have
\begin{align}\label{eq:diagonal-lower}
 \|(I+\mathsf S(s))y\| \geq c_{p,X}\|y\|
 \end{align}
 and
 \begin{align}
 \label{eq:diagonal-real-bound}
 \|\mathsf S(s)\|_{\calL(Y_m^N)} \leq K_{p,X}.
\end{align}
The constants $c_{p,X}$ and $K_{p,X}$ are independent of $N,m,s$, and of the chosen times $t_1,\dots,t_m$.
\end{proposition}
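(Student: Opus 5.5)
Both estimates will come from the representation \eqref{eq:diagonal-convex-combination}, $\mathsf S(s)=\E_u\mathsf P_u(s)$. By the remark at the end of Section~\ref{sec:coupling}, this is a finite convex combination $\sum_{k=1}^M\lambda_k\mathsf P_k$ of commuting projections on $Y_m^N$. The upper bound \eqref{eq:diagonal-real-bound} is immediate: by the triangle inequality and \eqref{eq:Stein-Pu},
$$
 \|\mathsf S(s)\|\le \E_u\|\mathsf P_u(s)\|\le K_{p,X}.
$$
The constant does not depend on $N$, $m$, $s$, or the times $t_j$.

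For the lower bound \eqref{eq:diagonal-lower} I want to apply Pisier's Lemma~\ref{lem:lower}. Its hypotheses require \emph{contractive} projections, whereas \eqref{eq:Stein-Pu} only gives $\|\mathsf P_u(s)\|\le K_{p,X}$. I will remedy this by renorming. Let $\mathcal P$ be the multiplicative semigroup generated by the finitely many projections $\mathsf P_k$, together with the identity. On $Y_m^N$ put
$$
 |||y||| := \sup_{Q\in\mathcal P}\|Qy\|.
$$
By \eqref{eq:product-Pu} this gives $\|y\|\le |||y|||\le K_{p,X}\|y\|$. Since $\mathcal P$ is closed under multiplication by each $\mathsf P_k$, every $\mathsf P_k$ is a contraction for $|||\cdot|||$. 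The projections still commute and $\mathsf S(s)$ is the same convex combination of them.

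Next I need a type bound for the renormed space. Type passes to isomorphic norms with a controlled constant:
$$
 \tau_{r,(Y,|||\cdot|||)}\le K_{p,X}\,\tau_{r,Y_m^N}.
$$
By the preliminaries, $\tau_{r,Y_m^N}$ is bounded independently of $m$, $N$, and the underlying measure space, for a fixed $1<r<\min\{p,r_X\}$. Lemma~\ref{lem:lower} in the new norm then gives
$$
 |||(I+\mathsf S(s))y|||\ge \tfrac14(\pi K_{p,X}\tau)^{-r'}|||y|||.
$$
Returning to the original norm costs one further factor $K_{p,X}$. This yields \eqref{eq:diagonal-lower} with
$$
 c_{p,X}=\tfrac14(\pi K_{p,X}\tau)^{-r'}K_{p,X}^{-1},
$$
where $\tau$ denotes the uniform type-$r$ constant of the spaces $Y_m^N$. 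It is independent of $N,m,s$ and the $t_j$.

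The main point needing care is this renorming step. The product bound \eqref{eq:product-Pu} must hold uniformly over arbitrary finite products, so that the supremum defining $|||\cdot|||$ is finite and equivalent to $\|\cdot\|$. It must also hold with the identity included; that case is trivial since $\|y\|\le|||y|||$. Finally, the type constant must survive the renorming. Both points are supplied by the earlier results, so I expect no genuine obstruction beyond the bookkeeping of constants.
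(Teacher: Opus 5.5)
Your proposal is correct and follows the paper's proof essentially step for step. The upper bound comes from $\mathsf S(s)=\E_u\mathsf P_u(s)$ together with \eqref{eq:Stein-Pu}. The lower bound comes from renorming by the supremum over finite products of the commuting projections (the paper's $\|\cdot\|_{s,\#}$), transferring the type constant with one factor $K_0$, applying Pisier's lemma, and paying one more factor $K_0$ to return to the original norm. The only cosmetic difference is that the paper writes $K_0=\max\{1,K_{p,X}\}$ to account for the identity in the supremum. In your version this is harmless, since a nonzero projection has norm at least $1$ and so $K_{p,X}\ge 1$ automatically.
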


\begin{proof}
Fix $s>0$ and put
$$
 \mathcal P_s:=\{\mathsf P_u(s):\,u\in[0,1]^N\}.
$$
This is a finite commuting family of projections. Putting $K_0:=\max\{1,K_{p,X}\}$, by \eqref{eq:product-Pu} we obtain
$$
 \sup\bigl\{\|Q_1\cdots Q_q\|:\,  q\geq1,\ Q_1,\ldots,Q_q\in\mathcal P_s \bigr\} \leq K_0,
$$
uniformly in all parameters. With the empty product interpreted as $I$, define
$$
 \|y\|_{s,\#} := \sup\bigl\{ \|Q_1\cdots Q_qy\|:\,q\geq0,\ Q_1,\ldots,Q_q\in\mathcal P_s \bigr\}.
$$
Then
$$
 \|y\|\leq\|y\|_{s,\#}\leq K_0\|y\|,
$$
and every operator in $\mathcal P_s$ is contractive for this norm.

Choose $r\in(1,2]$ so that all the spaces $Y_m^N$ have type $r$ with type constant at most $\tau$, where $r$ and $\tau$ depend only on $p$ and $X$; here we use that UMD spaces are $K$-convex and hence have non-trivial type (see Section~\ref{sec:preliminaries}). In the norm $\|\cdot\|_{s,\#}$ the type constant is at most $K_0\tau$, since
$$
 \Bigl(\E_\varepsilon \Bigl\|\sum_i\varepsilon_i y_i\Bigr\|_{s,\#}^2 \Bigr)^{1/2}
 \leq K_0\tau  \Bigl(\sum_i\|y_i\|_{s,\#}^r\Bigr)^{1/r}.
$$
By \eqref{eq:diagonal-convex-combination}, $\mathsf S(s)$ is a finite convex combination of members of $\mathcal P_s$. Pisier's lemma, applied in the equivalent norm, gives
$$
 \|(I+\mathsf S(s))y\|_{s,\#} \geq \frac14(\pi K_0\tau)^{-r'}\|y\|_{s,\#}.
$$
Returning to the original norm proves \eqref{eq:diagonal-lower}, with
$$
 c_{p,X}:=\frac1{4K_0}(\pi K_0\tau)^{-r'}.
$$
The bound \eqref{eq:diagonal-real-bound} follows directly from \eqref{eq:diagonal-convex-combination} and \eqref{eq:Stein-Pu},
$$
 \|\mathsf S(s)y\| \leq \E_u\|\mathsf P_u(s)y\|  \leq K_{p,X}\|y\|.
$$
\end{proof}

\begin{theorem}[Uniform real-time Walsh $R$-bound]
\label{thm:Walsh-real-Rbound}
Let $X$ be UMD and let $1<p<\infty$. Then
\begin{equation}
\label{eq:Walsh-real-Rbound}
 \sup_{N\geq1} \cR_{L^p(\D^N;X)} \bigl(\{T_N(t):\,t\geq0\}\bigr) \leq C_{p,X}.
\end{equation}
\end{theorem}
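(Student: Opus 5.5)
The plan is to read the $R$-bound off the uniform operator bound \eqref{eq:diagonal-real-bound} of Proposition~\ref{prop:lifted-Walsh-estimates}, which is already dimension-free.

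Fix $N\ge1$, $m\ge1$, times $t_1,\dots,t_m\ge0$, and functions $f_1,\dots,f_m\in L^p(\D^N;X)$. By the definition of $R$-boundedness, we must show
$$
 \Bigl\|\sum_{j=1}^m\varepsilon_jT_N(t_j)f_j\Bigr\|_{\Rad_m(L^p(\D^N;X))}
 \le C_{p,X}\Bigl\|\sum_{j=1}^m\varepsilon_jf_j\Bigr\|_{\Rad_m(L^p(\D^N;X))}
$$
with $C_{p,X}$ independent of $N$, $m$, the $t_j$ and the $f_j$.

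\textbf{Step 1: reduce to ordered positive times.} The Rademacher variables are independent and identically distributed, so the norm of a Rademacher sum is invariant under permuting the indices (apply the same permutation to both sides). Hence we may assume $t_1\le\dots\le t_m$.

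It remains to dispose of times $t_j=0$. One option is to use strong continuity: replace $t_j$ by $t_j+\delta$ and let $\delta\downarrow0$. Alternatively, the construction of Section~\ref{sec:coupling} works verbatim with $t_j=0$: then $S_{1}(u)=\{1,\dots,N\}$ almost surely, so $M_{S_1(u)}=I$, and the nested filtration structure is preserved. Either way, we may assume all times are positive.

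\textbf{Step 2: apply the proposition at $s=1$.} With these times, the lifted semigroup on $Y_m^N$ satisfies
$$
 \mathsf S(1)\Bigl(\sum_j\varepsilon_jf_j\Bigr)=\sum_j\varepsilon_jT_N(t_j)f_j.
$$
The bound \eqref{eq:diagonal-real-bound} gives $\|\mathsf S(1)\|\le K_{p,X}$, uniformly in $N$, $m$ and the times. This is exactly the displayed inequality above, but with $p$-th rather than second Rademacher moments if $Y_m^N$ was normed that way in Stein's inequality.

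\textbf{Step 3: adjust the moments.} The Kahane--Khintchine inequalities convert between $p$-th and second Rademacher moments, at the cost of a constant depending only on $p$. This yields \eqref{eq:Walsh-real-Rbound} with $C_{p,X}$ a multiple of $K_{p,X}$.

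The only delicate point is bookkeeping. In the main case it is the ordering assumption, which is needed for the filtrations in \eqref{eq:Stein-Pu} to be nested; in the edge case it is the time $t=0$. The permutation invariance in Step~1 and the remark on $t_j=0$ handle both. No genuine obstacle is expected, since the real work (Stein's inequality along the random decreasing filtrations) was already done in \eqref{eq:Stein-Pu}.
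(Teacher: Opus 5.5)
Your proposal is correct and follows the paper's proof: reorder the pairs $(t_j,f_j)$, apply \eqref{eq:diagonal-real-bound} at $s=1$, and include $t=0$ by strong continuity. Your alternative for $t=0$ also works, since $S_1(u)=\{1,\dots,N\}$ for every $u$. Step~3 is unnecessary, because the Kahane--Khintchine conversion is already built into $K_{p,X}$, but it does no harm.
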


\begin{proof}
Take finitely many positive times $t_j>0$ and functions $f_j\in L^p(\D^N;X)$. After a simultaneous reordering of the pairs $(t_j,f_j)$, Proposition~\ref{prop:lifted-Walsh-estimates} with $s=1$ gives
$$
 \Bigl\|\sum_{j=1}^m\varepsilon_jT_N(t_j)f_j\Bigr\|_{\Rad_m(L^p(\D^N;X))}
 \leq K_{p,X} \Bigl\|\sum_{j=1}^m\varepsilon_jf_j\Bigr\|_{\Rad_m(L^p(\D^N;X))}.
$$
This proves the $R$-boundedness estimate for positive times. Since $T_N(t)\to I$ strongly as $t\downarrow0$ and strong operator closure does not increase an $R$-bound, it also includes $T_N(0)=I$.
\end{proof}

\section{The Walsh derivative \texorpdfstring{$R$}{R}-bound}
\label{sec:Walsh-proof}

We continue with a Walsh derivative $R$-bound.

\begin{theorem}\label{thm:Walsh-real-derivative-Rbound} Let $X$ be a UMD space and let $1<p<\infty$. For every $N\geq 1$ the family $\{t\Delta_NT_N(t):\,t>0\}$ is $R$-bounded, and
$$
 \sup_{N\geq 1} \cR_{L^p(\D^N;X)} \bigl(\{t\Delta_NT_N(t):\,t>0\}\bigr) \leq C_{p,X},
$$
where $C_{p,X}\geq 0$ is a constant depending only on $p$ and $X$.
\end{theorem}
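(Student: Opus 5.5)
The plan is to deduce the derivative $R$-bound from the two uniform estimates of Proposition~\ref{prop:lifted-Walsh-estimates}, in the same way that Theorem~\ref{thm:Walsh-real-Rbound} was deduced from \eqref{eq:diagonal-real-bound}.

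\emph{Reduction to one operator.} Given times $0<t_1\le\dots\le t_m$ (after simultaneous reordering) and $f_1,\dots,f_m\in L^p(\D^N;X)$, the generator formula \eqref{eq:diagonal-generator} gives
$$
 \sum_{j=1}^m\varepsilon_j t_j\Delta_N T_N(t_j)f_j = \mathsf A\mathsf S(1)\Bigl(\sum_{j=1}^m\varepsilon_jf_j\Bigr).
$$
So it suffices to show
$$
 \sup_{s>0}\|s\mathsf A\mathsf S(s)\|_{\calL(Y_m^N)}\le C_{p,X},
$$
uniformly in $N$, $m$ and the times; only $s=1$ is actually needed. In other words, the bounded semigroup $\mathsf S$ on $Y_m^N$ must be shown to be analytic, with a quantitative bound depending only on $K_{p,X}$, $c_{p,X}$ and the type data. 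This is exactly the structure of Pisier's proof of analyticity of the Walsh semigroup; the difference is that here it is run on the lifted semigroup.

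\emph{Key steps.} First, upgrade the lower bound \eqref{eq:diagonal-lower} to invertibility of $I+\mathsf S(s)$ with $\|(I+\mathsf S(s))^{-1}\|\le c_{p,X}^{-1}$. For this I would use duality. By $K$-convexity of $L^p(\D^N;X)$, the dual of $Y_m^N$ is isomorphic to $\Rad_m(L^{p'}(\D^N;X^*))$, with constants depending only on $p$ and $X$. Under this identification $\mathsf S(s)^*$ is the same kind of diagonal Walsh semigroup with the same times, because $T_N(t)$ is self-adjoint. Since $X^*$ is again UMD, Proposition~\ref{prop:lifted-Walsh-estimates} applies to the adjoint and gives a uniform lower bound for $I+\mathsf S(s)^*$. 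Injectivity of the adjoint gives dense range of $I+\mathsf S(s)$, and the lower bound gives closed range, so $I+\mathsf S(s)$ is invertible.

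Second, apply the quantitative ``Beurling--Kato--Pisier'' principle. If $(\mathsf S(s))_{s\ge0}$ is a $C_0$-semigroup with $\sup_s\|\mathsf S(s)\|\le K$ and $\sup_s\|(I+\mathsf S(s))^{-1}\|\le M$, then it is bounded analytic, with $\sup_{s>0}\|s\mathsf A\mathsf S(s)\|\le C(K,M)$. This is the mechanism used in \cite{Pisier} (see also the proof of the Walsh analyticity theorem in \cite[Section~7.4]{HNVW2}). I would quote it in that form, checking that the constant there depends only on $K$ and $M$. Applying it with $K=K_{p,X}$ and $M=c_{p,X}^{-1}$ gives the displayed bound, and therefore the $R$-bound for $t>0$ with a constant independent of $N$.

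\emph{Main obstacle.} The delicate point is the second step: making sure the analyticity criterion is genuinely quantitative, with constants depending only on $K$ and $M$ and not on the space or on $N$. If the available reference only gives a qualitative statement, I would prove the bound directly. Since $\mathsf A$ has spectrum contained in $[0,\infty)$ (it is diagonal with eigenvalues $t_j k$), the task is to control $s\mathsf A e^{-s\mathsf A}$. The plan would be to express it through $(I+\mathsf S(\cdot))^{-1}$ and $\mathsf S(\cdot)$, for instance by a contour or Taylor argument. The argument would use the identity $I-\mathsf S(2s)=(I-\mathsf S(s))(I+\mathsf S(s))$ together with the uniform bounds to show $\limsup_{s\downarrow0}\|I-\mathsf S(s)\|<2$ in a suitable equivalent renorming. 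Kato's criterion would then give analyticity with explicit constants.
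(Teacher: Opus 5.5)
Your proposal is correct and follows the paper's proof. Both reduce to the single operator $\mathsf A\mathsf S(1)$ on $Y_m^N$ and run Pisier's holomorphy argument on the lifted semigroup using \eqref{eq:diagonal-lower} and \eqref{eq:diagonal-real-bound}. The paper makes your black-boxed quantitative step explicit. It uses the identity $-(I+\mathsf S(s_0))y=\int_0^{s_0}e^{i\xi s}\mathsf S(s)(i\xi-\mathsf A)y\dd s$ with $s_0=\pi/|\xi|$ to get $\|(i\xi-\mathsf A)^{-1}\|\le \pi K_{p,X}/(c_{p,X}|\xi|)$, extends this to a sector, and bounds $\mathsf S'(1)$ by Cauchy's formula.

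Your duality step is unnecessary. Only the lower bound \eqref{eq:diagonal-lower} is used, and invertibility of $I+\mathsf S(s)$ and of $i\xi-\mathsf A$ for $\xi\neq0$ is immediate: $\mathsf A$ is diagonal on the finite Walsh--Rademacher decomposition with eigenvalues $t_j|A|\ge 0$. For the same reason, the Kato-type renorming fallback is not needed.
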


\begin{proof}
Fix $N\ge 1$, $m\ge 1$, and times $t_1,\ldots,t_m>0$, ordered as in Section~\ref{sec:coupling}. On the Rademacher space $Y_m^N$, the diagonal semigroup $\mathsf S(s)=e^{-s\mathsf A}$ satisfies \eqref{eq:diagonal-lower} and \eqref{eq:diagonal-real-bound}. Moreover, \eqref{eq:diagonal-generator} shows that $\mathsf A$ is diagonal on the finite Walsh decomposition, with eigenvalues $t_j|A|\geq0$; in particular, $i\xi-\mathsf A$ is invertible for $\xi\ne 0$.

The next argument follows Pisier \cite{Pisier}; see also \cite[proof of Proposition~7.4.27]{HNVW2}.
For $\xi\in\mathbb R\setminus\{0\}$ and $s_0=\pi/|\xi|$, the integrated semigroup identity
$$
 -(I+\mathsf S(s_0))y = \int_0^{s_0} e^{i\xi s}\mathsf S(s)(i\xi-\mathsf A)y\dd s,
 \qquad y\in\Dom(\mathsf A),
$$
combined with \eqref{eq:diagonal-lower} and \eqref{eq:diagonal-real-bound}, gives
\begin{align*}
 \|(i\xi-\mathsf A)^{-1}\|_{\calL(Y^N_m)} \leq \frac{M}{|\xi|},
 \qquad M:=\frac{\pi K_{p,X}}{c_{p,X}}.
\end{align*}
Applying \cite[Lemma~10.33]{vanNeervenFA} to the operators $\pm i\mathsf A$ extends the estimate on the imaginary estimate to sectors about the two imaginary half-axes. Together with the Laplace resolvent estimate in the open left half-plane, which follows from \eqref{eq:diagonal-real-bound}, this gives an angle
$\rho\in(0,\pi/2)$ such that $\mathbb C\setminus\overline{\Sigma_\rho}
 \subseteq\varrho(\mathsf A)$ and
$$
 \sup_{\lambda\notin\overline{\Sigma_\rho}} \|\lambda(\lambda-\mathsf A)^{-1}\| \leq C_{p,X},
$$
uniformly in $N,m$ and in the chosen times. By the sectorial characterisation of bounded analytic semigroups (see, e.g., \cite[Theorem~G.5.2]{HNVW2} or \cite[Theorem 13.30]{vanNeervenFA}), this implies that the semigroups $\mathsf S$ extend boundedly to a common non-zero sector, uniformly with respect to the same parameters. Choose $r_{p,X}>0$ so small that the circle $|z-1|=r_{p,X}$ is contained in this sector. By Cauchy's formula,
$$
 \mathsf S'(1) = \frac{1}{2\pi i} \int_{|z-1|=r_{p,X}} \frac{\mathsf S(z)}{(z-1)^2}\dd z.
$$
Since $\mathsf S'(1)=-\mathsf A\mathsf S(1)$, estimating the contour integral gives
\begin{equation}
\label{eq:diagonal-derivative}
 \|\mathsf A\mathsf S(1)\|_{\calL(Y^N_m)} = \|\mathsf S'(1)\| \leq \frac{C_{p,X}}{r_{p,X}}.
\end{equation}
By the definitions of $\mathsf A$ and $\mathsf S$,
\begin{align*}
 \mathsf A\mathsf S(1) \Bigl(\sum_{j=1}^m\varepsilon_jf_j\Bigr)
 & = \mathsf A \Bigl( \sum_{j=1}^m\varepsilon_jT_N(t_j)f_j \Bigr) = \sum_{j=1}^m \varepsilon_jt_j\Delta_NT_N(t_j)f_j.
\end{align*}
Therefore \eqref{eq:diagonal-derivative} says that
\begin{align*}
 & \Bigl(
  \E_\varepsilon\Bigl\|\sum_{j=1}^m \varepsilon_jt_j\Delta_NT_N(t_j)f_j \Bigr\|_{L^p(\D^N;X)}^2 \Bigr)^{1/2}
 \\ &\qquad \qquad
 \leq C_{p,X}\Bigl(\E_\varepsilon\Bigl\| \sum_{j=1}^m\varepsilon_jf_j\Bigr\|_{L^p(\D^N;X)}^2 \Bigr)^{1/2}.
\end{align*}
Since the finite family was arbitrary, \eqref{eq:diagonal-derivative} gives the asserted uniform $R$-bound.
\end{proof}

\section{Walsh \texorpdfstring{$R$}{R}-analyticity}\label{sec:Walsh-R-analyticity}

We can now prove dimension-free $R$-analyticity for the Walsh semigroups.

\begin{theorem}[Dimension-free $R$-analyticity and $R$-sectoriality]
\label{thm:Walsh-R-analyticity} Let $X$ be a UMD space and let $1<p<\infty$. There exist an angle $\theta_{p,X}\in(0,\pi/2)$ and a constant $C_{p,X}\ge 0$ such that, for every $N\geq 1$, the Walsh semigroup $T_N$ is bounded analytic on $\Sigma_{\theta_{p,X}}$ and
$$
 \cR_{L^p(\D^N;X)} \bigl(\{T_N(z):\,z\in\Sigma_{\theta_{p,X}}\}\bigr) \leq C_{p,X}.
$$
In particular, the operators $\Delta_N$ are uniformly $R$-sectorial with a common angle smaller than ${\pi}/{2}$.
\end{theorem}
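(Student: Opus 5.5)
The plan is to combine the two real-time $R$-bounds, Theorem~\ref{thm:Walsh-real-Rbound} and Theorem~\ref{thm:Walsh-real-derivative-Rbound}, through a Taylor expansion of $T_N(z)$ around real points. This is the standard route from ``$R$-bounded semigroup plus $R$-bounded $tAT(t)$'' to $R$-analyticity; see \cite[Section~10.3]{HNVW2}. On the finite-dimensional space $L^p(\D^N;X)$ the operator $\Delta_N$ is diagonalisable with eigenvalues in $\{0,\ldots,N\}$, so $T_N(z)=e^{-z\Delta_N}$ is entire and no domain issues arise. The only task is to obtain bounds independent of $N$.

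First I would bound the higher derivatives. Put $B_t:=t\Delta_NT_N(t)$ and write $(t\Delta_N)^kT_N(t)=k^k\bigl(\tfrac tk\Delta_NT_N(\tfrac tk)\bigr)^k$. By Kahane's contraction principle, products of elements of $R$-bounded families are $R$-bounded with $R$-bound at most the product of the bounds. Hence
$$
\cR\bigl(\{(t\Delta_N)^kT_N(t):t>0\}\bigr)\le k^kC^k,
$$
where $C=C_{p,X}$ is the constant from Theorem~\ref{thm:Walsh-real-derivative-Rbound}.

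Next I would expand around a real point. For $z=t(1+i\eta)$ with $t>0$ and $\eta\in\R$,
$$
T_N(z)=T_N(t)e^{-it\eta\Delta_N}=\sum_{k\ge0}\frac{(-i\eta)^k}{k!}(t\Delta_N)^kT_N(t),
$$
and the series converges in operator norm for fixed $N$. The $R$-bound of a family of sums is at most the sum of the $R$-bounds of the term families, so
$$
\cR\bigl(\{T_N(z):\arg z\in(-\theta,\theta)\}\bigr)\le\sum_{k\ge0}\frac{|\eta|^kk^kC^k}{k!}\le\sum_{k\ge0}(e|\eta|C)^k,
$$
using $k^k/k!\le e^k$. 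Choosing $|\eta|\le\tan\theta_{p,X}<1/(2eC)$ gives a uniform $R$-bound, at most $2$, on the sector $\Sigma_{\theta_{p,X}}$, for every $N$. Multiplying by the bound from Theorem~\ref{thm:Walsh-real-Rbound} is not needed here, since the $k=0$ term is already the family $\{T_N(t)\}$. Boundedness and analyticity on the sector follow from this.

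Finally I would deduce $R$-sectoriality. By the standard equivalence \cite[Theorem~10.3.3 / Proposition~10.3.2]{HNVW2}, $R$-boundedness of $\{T_N(z):z\in\Sigma_\theta\}$ implies that $\Delta_N$ is $R$-sectorial of angle $\pi/2-\theta$, with $R$-bounds depending only on $\theta$ and the semigroup $R$-bound. In particular these bounds are uniform in $N$. Concretely, $\lambda(\lambda-\Delta_N)^{-1}$ for $\lambda$ outside the sector is a Laplace transform of $T_N$ along a rotated ray, and $R$-bounds are preserved under such $L^1$-averages.

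The main obstacle is keeping every constant dimension-free. This rests entirely on the uniform constant in Theorem~\ref{thm:Walsh-real-derivative-Rbound} and on using only $R$-bound calculus (products, sums, convex/$L^1$ averages) rather than any spectral information tied to $N$.
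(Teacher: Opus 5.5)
This is essentially the paper's argument: product stability gives $\cR(\{(t\Delta_N)^kT_N(t):t>0\})\le k^kC^k$ for $k\ge1$, the Taylor series of $T_N(z)$ about real points $t>0$ is summed as an $R$-bounded series, and the standard $R$-analyticity/$R$-sectoriality equivalence finishes the proof. Your restriction to $z=t(1+i\eta)$, together with the fact that $e^{-z\Delta_N}$ is entire, simply replaces the paper's gluing of the discs $B(t;\delta t)$.

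One slip needs fixing. The $k=0$ term is the family $\{T_N(t):t>0\}$, and its $R$-bound is the constant $R_0$ of Theorem~\ref{thm:Walsh-real-Rbound}, not $1$; the identity $(t\Delta_N)^kT_N(t)=k^k(\cdots)^k$ only holds for $k\ge1$. So your uniform bound is $R_0+1$ rather than $2$, and this is exactly where Theorem~\ref{thm:Walsh-real-Rbound} is needed.
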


\begin{proof}
Let $R_0$ and $R_1$ denote the uniform $R$-bounds supplied by Theorems~\ref{thm:Walsh-real-Rbound} and \ref{thm:Walsh-real-derivative-Rbound}, respectively. For $n\geq 1$,
$$
 t^n\Delta_N^nT_N(t) =n^n\bigl((t/n)\Delta_NT_N(t/n)\bigr)^n.
$$
The product stability of $R$-bounded families \cite[Proposition~8.1.19]{HNVW2} therefore gives
$$
 \sup_{N\geq 1} \cR_{L^p(\D^N;X)} \bigl(\{t^n\Delta_N^nT_N(t):\,t>0\}\bigr) \leq n^nR_1^n.
$$
Choose $\delta\in(0,1)$ such that $e\delta R_1<1$. For $|z-t|<\delta t$, the $n$th terms in the Taylor expansions about $t>0$ then have $R$-bound at most
$$
 \frac{\delta^nn^n}{n!}R_1^n \leq (e\delta R_1)^n, \qquad n\geq 1,
$$
while the zeroth terms have $R$-bound at most $R_0$. Hence, by \cite[Proposition~8.1.24]{HNVW2}, these Taylor series form an $R$-bounded family, uniformly in $N$. Exactly as in the proof of the real characterisation of analytic semigroups (see, e.g., \cite[Theorem~13.31]{vanNeervenFA}), they agree on overlaps and glue to bounded analytic semigroups on the union of the discs $B(t;\delta t)$, $t>0$, and the resulting analytic semigroups agree with $e^{-t\Delta_N}$ on the positive real axis. This union contains every sector $\Sigma_\theta$ with $\sin\theta<\delta$; the same cited argument gives strong continuity at the vertex. The final assertion follows from the standard equivalence between $R$-analyticity and strict $R$-sectoriality recalled in Section~\ref{sec:preliminaries}.
\end{proof}

\section{From Walsh to Ornstein--Uhlenbeck}
\label{sec:CLT}

Let $X$ be a Banach space, let $1\leq p<\infty$, and let $\gamma_d$ denote the standard Gaussian measure on $\mathbb R^d$. Since the scalar Ornstein--Uhlenbeck operators $P_d(t)$ are positive contractions on $L^p(\mathbb R^d,\gamma_d)$, their tensor extensions $P_d(t)\otimes I_X$ define contractions on $L^p(\mathbb R^d,\gamma_d;X)$ and form a $C_0$-contraction semigroup on this space. We denote these extensions by $P_d(t)$ again, denote their generator by $L_d$, and put
$$
 A_d:=-L_d.
$$

The passage from Walsh to Ornstein--Uhlenbeck used here goes back to Beckner \cite{Beckner1975}; an operator-theoretic version was developed by Gon\c{c}alves \cite{Goncalves2016}. The following lemma records the $R$-bounded form needed for radial multipliers.

We use the probabilists' Hermite polynomials $H_k$, $k\in \mathbb N = \{0,1,2,\dots\}$, normalised so that
\begin{equation}
\label{eq:Hermite-generating-function}
 e^{zx-z^2/2} = \sum_{k=0}^\infty H_k(x)\frac{z^k}{k!}.
\end{equation}

\begin{lemma}[Transfer of $R$-multipliers]
\label{lem:chaos-multiplier-transfer}
Let $X$ be a Banach space, let $1\leq p<\infty$, and let $\Theta$ be an index set. For $\theta\in\Theta$, let $(m_\theta(k))_{k\geq 0}$ be a scalar sequence. For $N\geq 1$ define the Walsh multiplier
$$
 M_{\theta,N}^{\rm W} \Bigl(\sum_{A\subseteq\{1,\ldots,N\}}w_Ax_A\Bigr)
 := \sum_{A\subseteq\{1,\ldots,N\}} m_\theta(|A|)w_Ax_A.
$$
Suppose that for each $N\ge 1$ the operator family $\{M_{\theta,N}^{\rm W}:\,\theta\in\Theta\}$ is $R$-bounded on $L^p(\D^N;X)$, with uniform $R$-bound
$$
\sup_{N\geq 1} \cR_{L^p(\D^N;X)}\bigl(\{M_{\theta,N}^{\rm W}:\,\theta\in\Theta\}\bigr)\leq C.
$$
Then for every $d\geq 1$ and every finite subset $F\subseteq\mathbb N^d$ the Gaussian multiplier
$$
 M_{\theta,d}^{\rm G} \Bigl( \sum_{\alpha\in F}H_\alpha x_\alpha \Bigr) := \sum_{\alpha\in F} m_\theta(|\alpha|)H_\alpha x_\alpha,
$$
extends boundedly to $L^p(\gamma_d;X)$, the family $\{M_{\theta,d}^{\rm G}:\,\theta\in\Theta\}$
is $R$-bounded on $L^p(\gamma_d;X)$, and we have the uniform $R$-bound
$$
 \sup_{d\geq 1} \cR_{L^p(\gamma_d;X)} \bigl(\{M_{\theta,d}^{\rm G}:\,\theta\in\Theta\}\bigr) \leq C.
$$
\end{lemma}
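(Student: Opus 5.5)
Since the multipliers depend only on $|A|$, the natural route is a central limit (Beckner-type) embedding. I would treat $d$ coordinate blocks of $n$ Rademacher variables each, so $N=dn$, and set
$$
 Z_{n,k} := n^{-1/2}\sum_{i=1}^n x_{(k-1)n+i},\qquad k=1,\dots,d .
$$
It suffices to prove the $R$-bound for finitely many $\theta_1,\dots,\theta_J\in\Theta$ and polynomials $f_j=\sum_{\alpha\in F}H_\alpha x_{\alpha,j}$ supported in a fixed finite $F\subseteq\mathbb N^d$; boundedness of each $M^{\rm G}_{\theta,d}$ is the case $J=1$, and the extension to all of $L^p(\gamma_d;X)$ follows by density of polynomials. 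For each $\alpha\in F$ I will build a Walsh polynomial $W^{(n)}_\alpha$ on $\D^{dn}$ that is homogeneous of degree $|\alpha|$ and approximates $H_\alpha(Z_n)$. Concretely, for $d=1$ I take
$$
 W^{(n)}_k := \frac{k!}{n^{k/2}}\, e_k(x_1,\dots,x_n),
$$
where $e_k$ is the $k$-th elementary symmetric polynomial. Using $x_i^2=1$, the generating function reads $\prod_{i}(1+z x_i/\sqrt n)=\sum_k e_k(x)\,z^k n^{-k/2}$. Its logarithm equals $zZ_n - z^2/2 + O(n^{-1/2})$, so it converges to $e^{zZ_n - z^2/2}$, and comparing with \eqref{eq:Hermite-generating-function} shows that $W^{(n)}_k - H_k(Z_n)\to0$ in every $L^q$. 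For general $d$ I take products over the blocks, $W_\alpha^{(n)}=\prod_{k} W^{(n)}_{\alpha_k}(\text{block }k)$. These are again sums of $w_A$ with $|A|=|\alpha|$, because the blocks are disjoint.

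The key algebraic point is then exact: since $W^{(n)}_\alpha$ lies in the Walsh chaos of order $|\alpha|$,
$$
 M^{\rm W}_{\theta,dn}\Bigl(\sum_\alpha W^{(n)}_\alpha x_{\alpha}\Bigr)=\sum_\alpha m_\theta(|\alpha|)W^{(n)}_\alpha x_\alpha .
$$
Applying the hypothesis on $L^p(\D^{dn};X)$ with Rademacher sums $\sum_j\varepsilon_j g_j^{(n)}$, where $g^{(n)}_j=\sum_\alpha W^{(n)}_\alpha x_{\alpha,j}$, gives
$$
 \Bigl\|\sum_j\varepsilon_j\sum_\alpha m_{\theta_j}(|\alpha|)W^{(n)}_\alpha x_{\alpha,j}\Bigr\|\le C\Bigl\|\sum_j\varepsilon_j\sum_\alpha W_\alpha^{(n)}x_{\alpha,j}\Bigr\|
$$
uniformly in $n$. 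I use the $p$-th moment version of the $R$-bound, via Kahane--Khintchine, or keep the exponent $2$ throughout.

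Next I pass to the limit $n\to\infty$. I replace each $W^{(n)}_\alpha$ by $H_\alpha(Z_n)$, with an error that tends to $0$ in $L^p$ because there are finitely many terms with fixed vectors. The expressions then become $\E\|\Phi(Z_n,\varepsilon)\|^p$ for continuous functions $\Phi$ of polynomial growth. By the multivariate CLT, $Z_n\to\gamma_d$ in law, and uniform integrability comes from bounded higher moments (Khintchine). So both sides converge to the corresponding Gaussian norms, namely $\E_\varepsilon\|\sum_j\varepsilon_j M^{\rm G}_{\theta_j,d}f_j\|^p_{L^p(\gamma_d;X)}$ and its counterpart with the $f_j$, and the estimate with the same constant $C$ follows.

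I expect the main obstacle to be the approximation step $W^{(n)}_k\to H_k(Z_n)$ in $L^p$ with uniform moment control. The cleanest route is likely the explicit recursion (Newton-type identities) expressing $e_k$ through the power sums $p_1=\sqrt n Z_n$ and $p_2=n$, with higher power sums reducing via $x_i^2=1$. The alternative is to argue through convergence of the generating functions for complex $z$ on a small circle and apply Cauchy estimates, using that the moments of $Z_n$ are bounded uniformly in $n$.
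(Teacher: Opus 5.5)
Your proposal is correct and is essentially the paper's proof: your lifts $\frac{k!}{n^{k/2}}e_k$ on $d$ independent blocks are exactly the paper's $\Phi_{k,M}$ and $\Phi_{\alpha,M}$, and the exact degree-preservation identity followed by a CLT and uniform-integrability limit is the same mechanism. The differences are only technical. You couple $W^{(n)}_\alpha$ with $H_\alpha(Z_n)$ in $L^q$ on the same space, most cleanly via Newton's identities, which show that $W^{(n)}_k$ is a polynomial in $Z_n$ whose coefficients converge to those of $H_k$, and you take moments from Khintchine; the paper instead proves convergence in distribution of the $\Phi_{\alpha,M}$ directly and takes moments from Bonami's inequality. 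To get the constant exactly $C$ you should indeed keep exponent $2$ rather than switching via Kahane--Khintchine, passing to the limit for each fixed $\varepsilon$ and then by dominated convergence in $\varepsilon$, as the paper does.
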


\begin{proof}
Let $\xi_1,\ldots,\xi_M$ be independent signs, taking the values $-1$ and $1$ with equal probability, and, for $0\leq k\leq M$, put
$$
 \Phi_{k,M} := \frac{k!}{M^{k/2}} \sum_{1\leq r_1<\cdots<r_k\leq M} \xi_{r_1}\cdots\xi_{r_k}
$$
with the convention $\Phi_{0,M}:=1$. We use $\xi_r$ for these signs, reserving $\varepsilon_j$ for the external Rademacher randomisation. The normalisation is chosen so that
\begin{equation}
\label{eq:Walsh-Hermite-generating-function}
 \prod_{r=1}^M\left(1+\frac{z\xi_r}{\sqrt M}\right) = \sum_{k=0}^M\Phi_{k,M}\frac{z^k}{k!}.
\end{equation}

Put $G_M := M^{-1/2}\sum_{r=1}^M\xi_r$. Uniformly for $z$ in compact subsets of $\mathbb C$, expansion of the logarithm in the left-hand side of \eqref{eq:Walsh-Hermite-generating-function} gives
$$
 \log\prod_{r=1}^M\left(1+\frac{z\xi_r}{\sqrt M}\right) = zG_M-\frac{z^2}{2}+\mathcal R_M(z),
$$
where, for every compact set $K\subseteq\mathbb C$,
$$
 \sup_{z\in K}|\mathcal R_M(z)| \lesssim_K M^{-1/2}
$$
for all sufficiently large $M$, uniformly in the signs $\xi_r$. The scalar central limit theorem, Cauchy's coefficient formula, and \eqref{eq:Hermite-generating-function} therefore give, for every fixed $k_0$, the convergence
$$
 (\Phi_{0,M},\ldots,\Phi_{k_0,M}) \to (H_0(g),\ldots,H_{k_0}(g))
$$
jointly in distribution as $M\to\infty$, where $g$ is a standard real Gaussian variable; compare \cite{Beckner1975,Goncalves2016}. Notice also that $\Phi_{k,M}$ is a homogeneous Walsh polynomial of degree $k$.

By the orthogonality of the Walsh monomials,
$$
 \|\Phi_{k,M}\|_{L^2}^2 = \frac{(k!)^2}{M^k}\binom Mk \leq k!.
$$
Bonami's hypercontractive inequality \cite{Bonami1970} therefore implies that, for every fixed $k$ and $q\geq 2$,
$$
 \sup_{M\geq k}\|\Phi_{k,M}\|_{L^q}<\infty.
$$

For $d\geq 1$, consider $d$ mutually independent blocks, each consisting of $M$ independent signs, and write $\Phi_{k,M}^{(i)}$ for the corresponding copy in the $i$-th block. Let $g=(g_1,\ldots,g_d)$ be a standard Gaussian vector with law $\gamma_d$. For $\alpha=(\alpha_1,\ldots,\alpha_d)\in\mathbb N^d$ put
$$
 \Phi_{\alpha,M} := \prod_{i=1}^d\Phi_{\alpha_i,M}^{(i)}.
$$
Then, for every finite $F\subseteq\mathbb N^d$,
$$
 (\Phi_{\alpha,M})_{\alpha\in F} \to (H_\alpha(g))_{\alpha\in F}
$$
jointly in distribution. Moreover, $\Phi_{\alpha,M}$ is homogeneous of Walsh degree $|\alpha|$, so the approximation preserves the Walsh degree. Independence of the blocks and the preceding one-dimensional bounds also give, for every fixed $q\geq 2$,
$$
 \sup_{M\geq \max_i\alpha_i} \|\Phi_{\alpha,M}\|_{L^q} <\infty \qquad(\alpha\in\mathbb N^d).
$$

Fix $\theta_1,\ldots,\theta_n\in\Theta$ and $X$-valued Hermite
polynomials
$$
 f_j=\sum_{\alpha\in F_j}H_\alpha x_{j,\alpha}, \qquad j=1,\ldots,n.
$$
For $M$ sufficiently large define their Walsh lifts on $\D^{dM}$ by
$$
 f_j^{(M)} := \sum_{\alpha\in F_j}\Phi_{\alpha,M}x_{j,\alpha}.
$$
Since each $\Phi_{\alpha,M}$ has Walsh degree $|\alpha|$,
$$
 M_{\theta_j,dM}^{\rm W}f_j^{(M)}
 = \sum_{\alpha\in F_j} m_{\theta_j}(|\alpha|) \Phi_{\alpha,M}x_{j,\alpha},
$$
which is the Walsh lift of $M_{\theta_j,d}^{\rm G}f_j$.

Let $(\delta_j)_{j=1}^n$ be an independent Rademacher sequence and fix for the moment a vector $\delta=(\delta_1,\ldots,\delta_n)$ with each $\delta_j$ unimodular. The above joint convergence gives
$$
 \sum_{j=1}^n\delta_jf_j^{(M)} \to \sum_{j=1}^n\delta_jf_j
$$
in distribution, and likewise
$$
 \sum_{j=1}^n \delta_jM_{\theta_j,dM}^{\rm W}f_j^{(M)}
 \to \sum_{j=1}^n \delta_jM_{\theta_j,d}^{\rm G}f_j
$$
in distribution.

All these random variables take values in the finite-dimensional space
$$
 E_0:=\operatorname{span}\{x_{j,\alpha}:\,1\leq j\leq n, \ \alpha\in F_j\}\subseteq X.
$$
The asserted vector-valued convergence in distribution follows by applying a continuous linear map to the jointly convergent scalar families. The uniform moment bounds above, the finiteness of the sets $F_j$, and the finiteness of the multiplier coefficients occurring here imply that the norms of both families have uniformly bounded $q$-th moments for every fixed $q\geq 2$. Choosing $q>\max\{p,2\}$ gives uniform integrability of their $p$-th powers. Hence
$$
 \Bigl\| \sum_{j=1}^n\delta_jf_j^{(M)} \Bigr\|_{L^p(\D^{dM};X)}
 \to \Bigl\| \sum_{j=1}^n\delta_jf_j \Bigr\|_{L^p(\gamma_d;X)}
$$
and
$$
 \Bigl\| \sum_{j=1}^n \delta_jM_{\theta_j,dM}^{\rm W}f_j^{(M)} \Bigr\|_{L^p(\D^{dM};X)}
 \to \Bigl\| \sum_{j=1}^n \delta_jM_{\theta_j,d}^{\rm G}f_j \Bigr\|_{L^p(\gamma_d;X)}.
$$

By the assumed Walsh $R$-bound,
\begin{align*}
 &\Bigl(  \E_\delta  \Bigl\|  \sum_{j=1}^n  \delta_jM_{\theta_j,dM}^{\rm W}f_j^{(M)} \Bigr\|_{L^p(\D^{dM};X)}^2\Bigr)^{1/2}
 \!\!\leq
 C \Bigl( \E_\delta \Bigl\| \sum_{j=1}^n\delta_jf_j^{(M)} \Bigr\|_{L^p(\D^{dM};X)}^2 \Bigr)^{1/2}\!.
\end{align*}
The passage to the external Rademacher expectation is also uniform. Indeed, for every $\delta\in\mathbb T^n$,
$$
 \Bigl\|\sum_{j=1}^n\delta_jf_j^{(M)}\Bigr\|_p
 \leq \sum_{j=1}^n\sum_{\alpha\in F_j} \|\Phi_{\alpha,M}\|_p\|x_{j,\alpha}\|,
$$
and the right-hand side is bounded independently of $M$ and $\delta$; the same estimate applies to the transformed sums, with the finitely many additional factors $|m_{\theta_j}(|\alpha|)|$. Thus the squares of the two $L^p$-norms are dominated by constants independent of $M$ and $\delta$. Dominated convergence with respect to Haar measure on $\mathbb T^n$ therefore allows us to pass to the limit under $\E_\delta$. We obtain
$$
 \Bigl( \E_\delta \Bigl\| \sum_{j=1}^n \delta_jM_{\theta_j,d}^{\rm G}f_j \Bigr\|_{L^p(\gamma_d;X)}^2 \Bigr)^{1/2}
 \leq C \Bigl( \E_\delta \Bigl\| \sum_{j=1}^n\delta_jf_j \Bigr\|_{L^p(\gamma_d;X)}^2 \Bigr)^{1/2}.
$$
Taking $n=1$ first shows that every $M_{\theta,d}^{\rm G}$ extends boundedly to $L^p(\gamma_d;X)$ with norm at most $C$. Since the $X$-valued Hermite polynomials are dense in this space, approximation of a general finite family $f_1,\ldots,f_n$ completes the proof.
\end{proof}

Assume from now on that $X$ is UMD and $1<p<\infty$, and write $P_d(t)=e^{-tA_d}$ on $L^p(\gamma_d;X)$. For the standard multivariate Hermite polynomials one has
$$
 A_dH_\alpha=|\alpha|H_\alpha, \qquad P_d(t)H_\alpha=e^{-t|\alpha|}H_\alpha.
$$
On $X$-valued Hermite polynomials, the symbols $e^{-tk}$ and $tke^{-tk}$ therefore give $P_d(t)$ and $tA_dP_d(t)$, respectively.

First take $\Theta=[0,\infty)$ and $m_t(k)=e^{-tk}$. By \eqref{eq:Walsh-real-Rbound} the corresponding Walsh semigroup multipliers are $R$-bounded uniformly in the dimension. Hence Lemma~\ref{lem:chaos-multiplier-transfer} gives bounded Gaussian multipliers $M_{t,d}^{\rm G}$ satisfying
$$
 M_{t,d}^{\rm G}H_\alpha=e^{-t|\alpha|}H_\alpha.
$$
Since the $X$-valued Hermite polynomials are dense and $P_d(t)$ has the same action on them, $M_{t,d}^{\rm G}=P_d(t)$, and
\begin{align}\label{eq:R-bound-Pd}
 \sup_{d\geq 1} \cR_{L^p(\gamma_d;X)} \bigl(\{P_d(t):\,t\geq 0\}\bigr) \leq C_{p,X}.
\end{align}

For the derivative family take $\Theta=(0,\infty)$ and $m_t(k)=tke^{-tk}$. Lemma~\ref{lem:chaos-multiplier-transfer} gives bounded operators $M_{t,d}^{\rm G}$ satisfying, on $X$-valued Hermite polynomials,
$$
 M_{t,d}^{\rm G}f=tA_dP_d(t)f.
$$
Let $f_n$ be $X$-valued Hermite polynomials converging to $f\in L^p(\gamma_d;X)$. Then $P_d(t)f_n\to P_d(t)f$ and
$$
 A_dP_d(t)f_n =t^{-1}M_{t,d}^{\rm G}f_n \to t^{-1}M_{t,d}^{\rm G}f.
$$
Since $A_d$ is closed, it follows that $P_d(t)f\in\Dom(A_d)$ and $M_{t,d}^{\rm G}f=tA_dP_d(t)f$. Thus
\begin{align}\label{eq:derivative-R-bound-Pd}
 \sup_{d\geq 1} \cR_{L^p(\gamma_d;X)} \bigl(\{tA_dP_d(t):\,t>0\}\bigr) \leq C_{p,X}.
\end{align}

\begin{theorem}[Dimension-free $R$-sectoriality of the Ornstein--Uhlenbeck operator]
\label{thm:main-Rsectorial}
Let $X$ be a UMD space and let $1<p<\infty$. There exist angles $\theta_{p,X},\rho_{p,X}\in (0,\pi/2)$ and a constant $C_{p,X}\geq 0$ such that, for every $d\geq 1$, the semigroup $P_d$ extends analytically to $\Sigma_{\theta_{p,X}}$ and
\begin{equation}
\label{eq:Gaussian-complex-time-Rbound}
 \sup_{d\geq 1} \cR_{L^p(\gamma_d;X)} \bigl(\{P_d(z):\,z\in\Sigma_{\theta_{p,X}}\}\bigr) \leq C_{p,X}.
\end{equation}
Moreover,
\begin{equation}
\label{eq:Gaussian-sectorial-Rbound}
 \sup_{d\geq 1} \cR_{L^p(\mathbb R^d,\gamma_d;X)}
 \Bigl( \bigl\{
   \lambda(\lambda-A_d)^{-1}:\, \lambda\notin\overline{\Sigma_{\rho_{p,X}}}
  \bigr\} \Bigr)
 \leq C_{p,X}.
\end{equation}
In particular, the operators $A_d$ are uniformly $R$-sectorial with a common angle strictly smaller than $\pi/2$.
\end{theorem}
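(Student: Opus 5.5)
The plan is to repeat, on the Gaussian side, the argument used in the proof of Theorem~\ref{thm:Walsh-R-analyticity}. The two real-time inputs are now \eqref{eq:R-bound-Pd} and \eqref{eq:derivative-R-bound-Pd} instead of Theorems~\ref{thm:Walsh-real-Rbound} and~\ref{thm:Walsh-real-derivative-Rbound}. Write $R_0$ and $R_1$ for the dimension-free constants in these two estimates.

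The first step is the higher-derivative bound. For $n\ge1$ and $t>0$ we have the identity
$$
 t^nA_d^nP_d(t)=n^n\bigl((t/n)A_dP_d(t/n)\bigr)^n,
$$
valid since $P_d(t/n)$ maps into $\Dom(A_d)$ and the factors commute. Product stability of $R$-bounds \cite[Proposition~8.1.19]{HNVW2} then gives
$$
 \cR\bigl(\{t^nA_d^nP_d(t):t>0\}\bigr)\le n^nR_1^n,
$$
uniformly in $d$.

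The second step is the Taylor expansion. Fix $\delta\in(0,1)$ with $e\delta R_1<1$. For $|z-t|<\delta t$ define
$$
 P_d(z):=\sum_{n\ge0}\frac{(t-z)^n}{n!}A_d^nP_d(t).
$$
The $n$-th term has $R$-bound at most $(e\delta R_1)^n$ for $n\ge1$, and the zeroth term has $R$-bound at most $R_0$. By \cite[Proposition~8.1.24]{HNVW2} the series therefore defines an $R$-bounded family, with bound $R_0+\sum_{n\ge1}(e\delta R_1)^n$, which is independent of $d$.

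The third step is to check that these local series define a single semigroup. On $X$-valued Hermite polynomials every series reduces to the scalar multiplier $e^{-z|\alpha|}H_\alpha$. So on this dense set the local definitions agree on overlapping discs, are holomorphic, and satisfy the semigroup law. By the uniform bounds these properties extend to the whole space. The union of the discs $B(t,\delta t)$, $t>0$, contains $\Sigma_{\theta}$ whenever $\sin\theta<\delta$, which gives \eqref{eq:Gaussian-complex-time-Rbound} with such a $\theta_{p,X}$. Strong continuity at $0$ inside smaller sectors follows from the uniform bound together with convergence on Hermite polynomials.

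The fourth step, and the one I expect to need the most care, is passing from \eqref{eq:Gaussian-complex-time-Rbound} to the $R$-sectorial bound \eqref{eq:Gaussian-sectorial-Rbound} with explicit dimension-free constants. The plan is to use Laplace transforms along rays. For $|\arg\lambda|>\pi/2+\theta'$ with $\theta'<\theta_{p,X}$, pick a direction $e^{i\phi}$ with $|\phi|\le\theta'$ adapted to $\lambda$, so that
$$
 \lambda(\lambda-A_d)^{-1}=-\lambda\int_0^\infty e^{\lambda se^{i\phi}}P_d(se^{i\phi})e^{i\phi}\dd s,
$$
where the scalar weights have $L^1$-norm bounded uniformly in $\lambda$ in the region. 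The $R$-bound of such integral means of an $R$-bounded family is controlled by the $L^1$-norm of the weight times the $R$-bound, by the convex-hull property \cite[Theorem~8.5.2]{HNVW2}. This yields \eqref{eq:Gaussian-sectorial-Rbound} with $\rho_{p,X}=\pi/2-\theta'$, where $\theta'<\theta_{p,X}$ is fixed (for example $\theta'=\theta_{p,X}/2$), since we need margin to keep the weights integrable. The final assertion of the theorem is then immediate.
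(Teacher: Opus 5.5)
Your proposal is correct and follows the paper's proof. You use the identity $t^nA_d^nP_d(t)=n^n\bigl((t/n)A_dP_d(t/n)\bigr)^n$ with product stability, the Taylor-series gluing from Theorem~\ref{thm:Walsh-R-analyticity} (your Hermite-polynomial check of the gluing is a convenient shortcut), and the Laplace-transform-along-rays argument with integral means, which spells out the passage from $R$-analyticity to strict $R$-sectoriality that the paper simply cites.

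Only the bookkeeping in Step~4 needs fixing. The region should read $|\arg\lambda|>\pi/2-\theta'$, not $\pi/2+\theta'$. Also, to keep the $L^1$-norms of the weights uniformly bounded near the edge of that region, you must take the rays at some angle $\theta''\in(\theta',\theta_{p,X})$ rather than $|\phi|\le\theta'$. With $|\phi|\le\theta'$ these norms behave like $1/\sin(|\arg\lambda|-\pi/2+\theta')$ and blow up at the edge.
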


\begin{proof}
For $s>0$, the preceding identification of $A_dP_d(s)$ shows that $P_d(s)$ maps the whole space into $\Dom(A_d)$. The standard semigroup commutation relation
$$
 A_dP_d(s)x=P_d(s)A_dx, \qquad x\in\Dom(A_d),
$$
then gives inductively
$$
 P_d(ns)L^p(\gamma_d;X)\subseteq\Dom(A_d^n),
 \qquad A_d^nP_d(ns)=\bigl(A_dP_d(s)\bigr)^n.
$$
Putting $s=t/n$, we obtain
$$
 t^nA_d^nP_d(t) = n^n\bigl((t/n)A_dP_d(t/n)\bigr)^n.
$$
The product stability of $R$-bounded families gives
$$
 \sup_{d\geq 1} \cR_{L^p(\mathbb R^d,\gamma_d;X)} \bigl(\{t^nA_d^nP_d(t):\,t>0\}\bigr) \leq n^n C_{p,X}^n.
$$
After increasing $C_{p,X}$ if necessary, we may assume that it also dominates the $R$-bound in \eqref{eq:R-bound-Pd}. Choose $\delta\in(0,1)$ so small that $e\delta C_{p,X}<1$. The Taylor-series argument in the proof of Theorem~\ref{thm:Walsh-R-analyticity}, applied to the preceding higher-derivative bounds and to \eqref{eq:R-bound-Pd}, gives \eqref{eq:Gaussian-complex-time-Rbound}. The equivalence between $R$-analyticity and strict $R$-sectoriality then gives \eqref{eq:Gaussian-sectorial-Rbound} for some $\rho_{p,X}<\pi/2$.
\end{proof}

\section{Proof of Theorem~\ref{thm:main-Hinfty}}
\label{sec:Hinfty-consequence}

Put
$$
 E_{\mathbb N} := L^p(\mathbb R^{\mathbb N},\gamma_{\mathbb N};X),
$$
where $\gamma_{\mathbb N}$ is the countable product of one-dimensional standard Gaussian measures. On cylindrical functions (i.e., functions effectively depending only on finitely many coordinates), define $P_{\mathbb N}(t)$ by applying the corresponding finite-dimensional Ornstein--Uhlenbeck semigroup. These compatible contractions extend, by density, to a $C_0$-semigroup on $E_{\mathbb N}$; denote its positive generator by $A_{\mathbb N}$.

On the underlying scalar $L^p$-space, $P_{\mathbb N}$ is a positive $C_0$-contraction semigroup, and its action on $E_{\mathbb N}$ is the tensor extension by $I_X$. The Hieber--Pr\"uss theorem \cite{HieberPruss} (see also \cite[Theorem~10.7.12]{HNVW2}) therefore gives, for every $\nu\in(\pi/2,\pi)$, a constant $H_{p,X,\nu}\ge 0$ such that
$$
 \|f(A_{\mathbb N})\| \leq H_{p,X,\nu}\|f\|_{H^\infty(\Sigma_\nu)},
 \qquad f\in H^1(\Sigma_\nu)\cap H^\infty(\Sigma_\nu).
$$
This means that $A_{\mathbb N}$ has a bounded $H^\infty(\Sigma_\nu)$-calculus.

The dimension-free $R$-analyticity proved in Theorem~\ref{thm:main-Rsectorial} passes to the infinite product. Choose a common sector $\Sigma_\theta$ on which the finite-dimensional analytic extensions are uniformly $R$-bounded. These extensions are compatible: if $d'\geq d$ and a function depends only on the first $d$ coordinates, the $d'$- and $d$-dimensional operators agree for positive real times and hence, by uniqueness of analytic continuation, throughout $\Sigma_\theta$.

If $z_1,\ldots,z_m\in\Sigma_\theta$ and $f_1,\ldots,f_m\in E_{\mathbb N}$ are cylindrical, all the $f_j$ depend on the first $d$ coordinates for some $d$. The finite-dimensional $R$-bound gives
$$
 \Bigl\| \sum_{j=1}^m\varepsilon_jP_{\mathbb N}(z_j)f_j \Bigr\|_{\Rad_m(E_{\mathbb N})}
 \leq C_{p,X} \Bigl\| \sum_{j=1}^m\varepsilon_jf_j \Bigr\|_{\Rad_m(E_{\mathbb N})}.
$$
By density of cylindrical functions, this estimate extends to arbitrary $f_1,\ldots,f_m\in E_{\mathbb N}$. The same compatibility and the uniform bounds on compact subsectors extend the finite-dimensional operators to a bounded analytic semigroup on $E_{\mathbb N}$ whose restriction to the positive axis is $P_{\mathbb N}$. Hence $\omega_R(A_{\mathbb N})<\pi/2$.

Since $X$ is UMD, so is $E_{\mathbb N}$, and UMD spaces have the triangular contraction property by \cite[Theorem~7.5.9]{HNVW2}. The Kalton--Weis comparison theorem in the form of \cite[Corollary~10.4.10]{HNVW2} therefore gives
$$
 \omega_{H^\infty}(A_{\mathbb N}) = \omega_R(A_{\mathbb N}) < \frac{\pi}{2}.
$$
Choose
$$
 \omega_{H^\infty}(A_{\mathbb N}) < \sigma_{p,X}<\frac{\pi}{2}.
$$
Then $A_{\mathbb N}$ has a bounded $H^\infty(\Sigma_{\sigma_{p,X}})$-functional calculus; denote its bound by $C_{p,X}$.

For $d\geq 1$, let
$$
 J_d:L^p(\mathbb R^d,\gamma_d;X)\to E_{\mathbb N}
$$
be the isometric embedding as functions of the first $d$ coordinates, and let $Q_d$ be conditional expectation onto these coordinates, followed by the natural identification with $L^p(\mathbb R^d,\gamma_d;X)$. Then $Q_dJ_d=I$ and
$$
 P_{\mathbb N}(t)J_d=J_dP_d(t), \qquad Q_dP_{\mathbb N}(t)=P_d(t)Q_d,
 \qquad t\geq 0.
$$
For $a>0$, the Laplace formula gives
$$
 (-a-A_{\mathbb N})^{-1}J_d = J_d(-a-A_d)^{-1},
 \qquad
 Q_d(-a-A_{\mathbb N})^{-1} = (-a-A_d)^{-1}Q_d.
$$
By the resolvent identities and analytic continuation, these relations extend to the contours used in the Dunford calculus, and therefore
$$
 f(A_d)=Q_df(A_{\mathbb N})J_d.
$$
It follows that
$$
 \|f(A_d)\| \leq C_{p,X}\|f\|_{H^\infty(\Sigma_{\sigma_{p,X}})},
 \qquad d\geq 1,
$$
for all $f\in H^1(\Sigma_{\sigma_{p,X}})\cap H^\infty(\Sigma_{\sigma_{p,X}})$. This proves Theorem~\ref{thm:main-Hinfty}.

\section{Necessity of the UMD condition}
\label{sec:OU-Hinfty-necessity}

In this final section we give a short proof of the result, due to Betancor, Castro, Curbelo and Rodr\'iguez-Mesa \cite[Proposition~2.2]{BetancorEtAl}, that the UMD property of $X$ is necessary for the existence of a bounded $H^\infty(\Sigma_\sigma)$-calculus on $L^p(\R^d,\gamma_d;X)$ for a single angle $0<\sigma<\pi$, dimension $d\geq 1$, and exponent $1<p<\infty$. Its main idea is that, after a ground state transform and dilation, the Ornstein--Uhlenbeck operator converges at small spatial scales to the Euclidean Laplacian. This reduces the assertion to the classical theorem of Guerre-Delabri\`ere that bounded imaginary powers of the vector-valued Euclidean Laplacian force the UMD property. A detailed modern proof of Guerre-Delabri\`ere's theorem, including a quantitative estimate for the UMD constant, is given in \cite[Corollary~10.5.2]{HNVW2}. The notes to \cite[Section~10.5]{HNVW2} explain that the quantitative estimate, although not stated explicitly in \cite{GuerreDelabriere1991}, is implicit in the original argument.

Fix $1<p<\infty$, let $X$ be a complex Banach space, and let
$$
 A_d=-\Delta+x\cdot\nabla
$$
denote the positive Ornstein--Uhlenbeck operator on $L^p(\R^d,\gamma_d;X)$, with semigroup $P_d(t)=e^{-tA_d}$.

\begin{theorem}[A bounded $H^\infty$-calculus forces UMD]
\label{thm:OU-Hinfty-implies-UMD}
Let $X$ be a Banach space, let $1<p<\infty$, and suppose that $A_d$ has a bounded $H^\infty(\Sigma_\sigma)$-calculus on $L^p(\R^d,\gamma_d;X)$ for some $d\ge 1$ and angle $0<\sigma<\pi$, with bound $C_{p,d,\sigma}$. Then $X$ is a UMD space, and we have
\begin{equation}
\label{eq:beta-vs-calculus}
 \beta_{p,X}\le C_{p,d,\sigma}.
\end{equation}
\end{theorem}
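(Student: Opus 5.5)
The plan is to follow the strategy sketched at the start of this section: transfer the bounded $H^\infty(\Sigma_\sigma)$-calculus of $A_d$ to the Euclidean Laplacian $-\Delta$ on $L^p(\R^d;X)$ with the same bound, and then invoke the quantitative Guerre-Delabri\`ere theorem \cite[Corollary~10.5.2]{HNVW2}. That result bounds $\beta_{p,X}$ by the $H^\infty$-bound of $-\Delta$, so it yields \eqref{eq:beta-vs-calculus}. Note that $-\Delta$ is sectorial on $L^p(\R^d;X)$ for every Banach space $X$, because the heat semigroup is a positive contraction semigroup tensorised with $I_X$; so no a priori assumption on $X$ is needed.

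\emph{Step 1: ground state transform.} Let $\varphi(x)=(2\pi)^{-d/2}e^{-|x|^2/2}$ and define the isometry $U:L^p(\R^d,\gamma_d;X)\to L^p(\R^d;X)$ by $Uf=\varphi^{1/p}f$. A direct computation on $C_c^\infty$-functions gives
$$
 B:=UA_dU^{-1}=-\Delta+\bigl(1-\tfrac2p\bigr)x\cdot\nabla+a_p|x|^2+b_p,
$$
with explicit constants $a_p,b_p$ depending only on $p$ and $d$. Since $U$ is an isometric isomorphism, $B$ has a bounded $H^\infty(\Sigma_\sigma)$-calculus with the same bound $C_{p,d,\sigma}$, and it has the same sectoriality constants as $A_d$.

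\emph{Step 2: dilation.} Let $\delta_\varepsilon g(x)=\varepsilon^{-d/p}g(x/\varepsilon)$, an isometry of $L^p(\R^d;X)$, and set $B_\varepsilon:=\varepsilon^2\delta_\varepsilon^{-1}B\delta_\varepsilon$. Then
$$
 B_\varepsilon=-\Delta+\varepsilon^2\bigl(1-\tfrac2p\bigr)x\cdot\nabla+\varepsilon^4a_p|x|^2+\varepsilon^2b_p .
$$
Since $f\mapsto f(\varepsilon^2\,\cdot)$ preserves $H^1(\Sigma_\sigma)\cap H^\infty(\Sigma_\sigma)$ and the supremum norm, we get $\|f(B_\varepsilon)\|\le C_{p,d,\sigma}\|f\|_\infty$. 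The sectoriality constants of $B_\varepsilon$ on every sector $\Sigma_\nu$ with $\nu>\omega(A_d)$ are uniform in $\varepsilon$.

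\emph{Step 3: strong resolvent convergence $B_\varepsilon\to-\Delta$.} I expect this to be the main technical step. Fix $\lambda\notin\overline{\Sigma_\nu}$ and $u\in C_c^\infty(\R^d)\otimes X$, which lies in the domain of every $B_\varepsilon$. Then
$$
 (\lambda-B_\varepsilon)^{-1}(\lambda+\Delta)u-u=(\lambda-B_\varepsilon)^{-1}(B_\varepsilon+\Delta)u .
$$
The right-hand side tends to $0$: the resolvents are uniformly bounded by $C/|\lambda|$, and $\|(B_\varepsilon+\Delta)u\|_p=O(\varepsilon^2)$ because $u$ has compact support. The set $(\lambda+\Delta)(C_c^\infty\otimes X)$ is dense, since $C_c^\infty\otimes X$ is a core for $-\Delta$. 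Uniform boundedness of the resolvents then gives $(\lambda-B_\varepsilon)^{-1}\to(\lambda+\Delta)^{-1}$ strongly.

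Inserting this into the Dunford integral for $f\in H^1(\Sigma_\sigma)\cap H^\infty(\Sigma_\sigma)$, on a contour $\partial\Sigma_\nu$ with $\omega(A_d)<\nu<\sigma$, dominated convergence applies. It is justified by the uniform bound $\|(z-B_\varepsilon)^{-1}\|\lesssim|z|^{-1}$ and integrability of $|f(z)|/|z|$. Hence $f(B_\varepsilon)x\to f(-\Delta)x$ for every $x$, and therefore $\|f(-\Delta)\|\le C_{p,d,\sigma}\|f\|_\infty$.

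\emph{Step 4: conclusion.} The Laplacian therefore has a bounded $H^\infty(\Sigma_\sigma)$-calculus on $L^p(\R^d;X)$ with bound at most $C_{p,d,\sigma}$; in particular its imaginary powers are bounded. The quantitative Guerre-Delabri\`ere theorem \cite[Corollary~10.5.2]{HNVW2} then shows that $X$ is UMD with $\beta_{p,X}\le C_{p,d,\sigma}$.

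If the cited corollary is formulated with another normalisation, for instance via imaginary powers or the $H^\infty$-calculus on a specific sector, the bound obtained in Step 3 should be fed in the same way, since it holds for every $f\in H^1(\Sigma_\sigma)\cap H^\infty(\Sigma_\sigma)$. The only remaining point to check is that the constant matches the form stated in \eqref{eq:beta-vs-calculus}.
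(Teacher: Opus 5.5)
Your strategy is the paper's: ground state transform, dilation, strong resolvent convergence to the Laplacian, dominated convergence in the Dunford integral, then Guerre-Delabri\`ere. Your Step~3 is a correct and more economical variant. The paper uses Trotter--Kato for the semigroups, then the Laplace transform on the negative axis, then a chain of Neumann-series discs. You instead get convergence at every $\lambda\notin\overline{\Sigma_\nu}$ at once, from $(\lambda-B_\varepsilon)^{-1}(\lambda+\Delta)u-u=(\lambda-B_\varepsilon)^{-1}(B_\varepsilon+\Delta)u$ and density of $(\lambda+\Delta)(C_{\rm c}^\infty\otimes X)$. That density needs $\lambda\in\varrho(-\Delta)$. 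Your justification (positive contraction semigroup) gives this only for $\operatorname{Re}\lambda<0$. Either use that the Gaussian kernel makes $-\Delta$ sectorial of angle $0$ on $L^p(\R^d;X)$ for every $X$, or, if $\sigma\le\pi/2$, first pass to a larger angle $\sigma'>\pi/2$ (the bound $C_{p,d,\sigma}$ persists) and take $\nu>\pi/2$.

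The real gaps are in Step~4.

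\emph{Dimension.} In the form used here, the quantitative Guerre-Delabri\`ere theorem \cite[Corollary~10.5.2]{HNVW2} concerns $L=-\frac{\!\dd^2}{\!\dd x^2}$ on $L^p(\R;X)$ and gives $\beta_{p,X}\le\liminf_{s\downarrow0}\|L^{is}\|$. You apply it to $-\Delta$ on $L^p(\R^d;X)$. Passing from $\R^d$ down to $\R$ for the Lebesgue Laplacian needs an extra argument, since Lebesgue measure has no conditional expectation onto one coordinate. The paper avoids this by reducing to $d=1$ at the start, on the Gaussian side. Let $J$ embed functions of the first coordinate and $Q$ be the Gaussian conditional expectation onto it. Then $QJ=I$, $P_d(t)J=JP_1(t)$ and $QP_d(t)=P_1(t)Q$, so $f(A_1)=Qf(A_d)J$ and $A_1$ inherits the bound $C_{p,d,\sigma}$. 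Your Steps~1--3 then run verbatim with $d=1$.

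\emph{Imaginary powers and the constant.} You left the passage from the $H^1\cap H^\infty$ bound to $\liminf_{s\downarrow0}\|L^{is}\|$ unchecked, and it is not a formality. Since $z^{is}\notin H^1(\Sigma_\sigma)$, applying the calculus to it requires $L$ to be standard sectorial: dense range, hence injective. For non-reflexive $X$ this does not follow from sectoriality alone. The paper gets dense range from strong stability of the heat semigroup, $\|e^{-tL}g\|_p\le\|h_t\|_p\|g\|_1\to0$, so that $L\int_0^T e^{-tL}g\dd t=g-e^{-TL}g\to g$. It then identifies $L^{is}$ with the extended-calculus operator, giving $\|L^{is}\|\le C_{p,d,\sigma}\|z^{is}\|_{H^\infty(\Sigma_\sigma)}=C_{p,d,\sigma}e^{\sigma|s|}$. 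Letting $s\downarrow0$ is what yields exactly $\beta_{p,X}\le C_{p,d,\sigma}$, with no angle-dependent loss.
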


\begin{proof}
We first reduce to dimension $d=1$. Let
$$
 J:L^p(\R,\gamma_1;X)\to L^p(\R^d,\gamma_d;X)
$$
be the isometric embedding as functions of the first coordinate, and let $Q$ be conditional expectation onto that coordinate, followed by the natural identification with $L^p(\R,\gamma_1;X)$. Then $QJ=I$ and
$$
 P_d(t)J=JP_1(t), \qquad QP_d(t)=P_1(t)Q, \qquad t\geq 0.
$$
These identities pass to the resolvents and the Dunford calculi, so
$$
 f(A_1)=Qf(A_d)J.
$$
Thus $A_1$ has a bounded $H^\infty(\Sigma_\sigma)$-calculus on $L^p(\R,\gamma_1;X)$ with bound at most $C_{p,d,\sigma}$.

In order to pass from Gaussian to Lebesgue measure we define
$$
 (U_pf)(x):=(2\pi)^{-1/(2p)}e^{-x^2/(2p)}f(x),
$$
which is an isometry from $L^p(\R,\gamma_1;X)$ onto $L^p(\R;X)$, and put
$$
 B:=U_pA_1U_p^{-1}.
$$
A direct calculation on $C_{\rm c}^\infty(\R)\otimes X$ gives
\begin{equation}
\label{eq:ground-state-B}
 B
 = -\frac{\!\dd^2}{\!\dd x^2}
   +\Bigl(1-\frac2p\Bigr)x\frac{\!\dd}{\!\dd x}
   +\frac{p-1}{p^2}x^2-\frac1p.
\end{equation}

For $R\geq 1$ let
$$
 (D_Rg)(x):=R^{1/p}g(Rx)
$$
be the isometric dilation on $L^p(\R;X)$, and set
$$
 B_R:=R^{-2}D_R^{-1}BD_R.
$$
Again by direct computation,
\begin{equation}
\label{eq:blow-up-BR}
 B_R = -\frac{\!\dd^2}{\!\dd x^2}
   +\frac{1-2/p}{R^2}x\frac{\!\dd}{\!\dd x}
   +\frac{p-1}{p^2R^4}x^2-\frac1{pR^2}
\end{equation}
on $C_{\rm c}^\infty(\R)\otimes X$. Hence, with
$$
 L:=-\frac{\!\dd^2}{\!\dd x^2},
$$
we have
\begin{equation}
\label{eq:BR-core-convergence}
 B_Rg\to Lg \quad\hbox{in }L^p(\R;X)
\end{equation}
for all $g\in C_{\rm c}^\infty(\R)\otimes X$.

The operators $B_R$ are obtained from $A_1$ by an isometric similarity and a rescaling. They therefore have bounded $H^\infty(\Sigma_\sigma)$-calculi with bound at most $C_{p,d,\sigma}$. Moreover,
$$
 e^{-tB_R} = D_R^{-1}U_pP_1(t/R^2)U_p^{-1}D_R, \qquad t\geq 0,
$$
so $-B_R$ generates a $C_0$-contraction semigroup.

Fixing an arbitrary sequence $R_n\to\infty$, we will apply the Trotter--Kato approximation theorem in the form of \cite[Theorem~III.4.8]{EngelNagel} to the generators $G:=-L$ and $G_n:=-B_{R_n}$.

The $C_0$-semigroup generated by $G$ is the heat semigroup $(e^{-tL})_{t\ge0}$ on $L^p(\R;X)$. By the preceding representation of $e^{-tB_R}$, each operator $G_n$ likewise generates a $C_0$-contraction semigroup. Thus the stability hypothesis of \cite[Theorem~III.4.8]{EngelNagel} holds with $M=1$ and $\omega=0$.

We next claim that
$$
 \mathscr D:=C_{\rm c}^\infty(\R)\otimes X
$$
is a core for $G$. To see this, we note that if $g\in\mathsf D(L)$, then $g_\varepsilon=e^{-\varepsilon L}g$ converges to $g$ in the graph norm of $L$ as $\varepsilon\downarrow0$. The heat kernel representation shows that $g_\varepsilon$ and its first two derivatives belong to $L^p(\R;X)$. Multiplication by smooth cut-off functions therefore approximates $g_\varepsilon$ in the graph norm by compactly supported elements of $W^{2,p}(\R;X)$. Mollification gives compactly supported smooth $X$-valued functions, and approximation of the resulting Bochner convolution integrals by finite sums gives approximation by elements of $C_{\rm c}^\infty(\R)\otimes X$ in the same norm.

The calculation leading to \eqref{eq:blow-up-BR} shows that $\mathscr D\subseteq\mathsf D(B)$; since $\mathscr D$ is invariant under the dilations $D_R$, it follows from $B_R = R^{-2}D_R^{-1}BD_R$ that $\mathscr D\subseteq\mathsf D(B_R)$ for every $R\geq 1$. Now
\eqref{eq:BR-core-convergence} gives
$$
 G_ng = -B_{R_n}g \to -Lg = Gg, \qquad g\in\mathscr D.
$$
Hence condition~(a) of \cite[Theorem~III.4.8]{EngelNagel} is satisfied. It follows that
$$
 e^{-tB_{R_n}}g\to e^{-tL}g \quad\hbox{in }L^p(\R;X),
$$
for every $g\in L^p(\R;X)$, uniformly for $t$ in compact subsets of $[0,\infty)$. Since the sequence $(R_n)$ was arbitrary, this proves
\begin{equation}
\label{eq:semigroup-convergence}
 e^{-tB_R}g\to e^{-tL}g \quad\hbox{in }L^p(\R;X), \qquad t\geq 0,
\end{equation}
as $R\to\infty$.

Choose $\nu$ with $\omega(A_1)<\nu<\sigma$. Similarity and rescaling give a uniform sectorial resolvent bound for $B_R$ outside $\overline{\Sigma_\nu}$. For $a>0$, the Laplace formula and
\eqref{eq:semigroup-convergence} give
$$
 (-a-B_R)^{-1}g = -\int_0^\infty e^{-at}e^{-tB_R}g\dd t
 \to -\int_0^\infty e^{-at}e^{-tL}g\dd t = (-a-L)^{-1}g.
$$
Thus the resolvents converge strongly on the negative real axis.

Let us prove next that this convergence propagates to the whole connected region $\mathbb C\setminus\overline{\Sigma_\nu}$. Suppose it is known at a point $z_0$ in this region. The uniform sectorial estimates bound $(z_0-B_R)^{-1}$ uniformly in $R$, and for $z$ in a sufficiently small disc about $z_0$ the resolvent identity gives the uniformly convergent Neumann expansion
$$
 (z-B_R)^{-1} = (z_0-B_R)^{-1} \sum_{k=0}^\infty \bigl(-(z-z_0)(z_0-B_R)^{-1}\bigr)^k.
$$
The analogous expansion for the resolvent of $L$, together with strong convergence of every power of $(z_0-B_R)^{-1}$ and uniform convergence of the two Neumann series, identifies the limit with $(z-L)^{-1}g$. Thus the convergence is locally uniform throughout a smaller disc. A finite chain of overlapping discs along a path from the negative real axis to any prescribed point proves
\begin{equation}
\label{eq:BR-resolvent-convergence}
 (z-B_R)^{-1}g
 \to
 (z-L)^{-1}g,
 \qquad z\notin\overline{\Sigma_\nu}.
\end{equation}

Now choose $\eta$ with $\nu<\eta<\sigma$. On $\partial\Sigma_\eta$, the sectorial estimates give
$$
 \sup_{R\geq 1}\|(z-B_R)^{-1}\| \lesssim_\eta \frac1{|z|}.
$$
Combining this domination with \eqref{eq:BR-resolvent-convergence} and the definition of $H^1(\Sigma_\sigma)$, dominated convergence in the Dunford formula gives, for every $f\in H^1(\Sigma_\sigma)\cap H^\infty(\Sigma_\sigma)$,
$$
 f(B_R)g\to f(L)g.
$$
It follows that
\begin{equation}
\label{eq:L-Hinfty-bound}
 \|f(L)\|_{\mathscr L(L^p(\R;X))} \le C_{p,d,\sigma}\|f\|_{H^\infty(\Sigma_\sigma)},
 \qquad f\in H^1(\Sigma_\sigma) \cap H^\infty(\Sigma_\sigma).
\end{equation}
Hence the Euclidean Laplacian $L$ has a bounded $H^\infty(\Sigma_\sigma)$-calculus with bound at most $C_{p,d,\sigma}$.

We next pass from the bounded $H^\infty$-calculus of $L$ to its
imaginary powers.

The heat semigroup $e^{-tL}$ is strongly stable on $L^p(\R;X)$. Indeed, if
$h_t$ denotes the heat kernel and
$g\in L^1(\R;X)\cap L^p(\R;X)$, then Young's inequality gives
$$
 \|e^{-tL}g\|_p \leq \|h_t\|_p\|g\|_1 \to 0 \qquad(t\to\infty),
$$
and the general case follows by density and contractivity. It follows that the range of $L$ is dense, since
$$
 L\int_0^T e^{-tL}g\dd t = g-e^{-TL}g \to g \qquad(T\to\infty).
$$
The operator $L$ is densely defined, being the negative of the generator of a $C_0$-semigroup. Hence $L$ is standard sectorial by \cite[Proposition~15.3.2]{HNVW3}; in particular, it is injective.

Following \cite[Definition~15.2.2]{HNVW3}, for $s\in\R$ the imaginary power $L^{is}$ is defined through the extended Dunford calculus by
$$
 L^{is}=f_s(L),
 \qquad
 f_s(z):=z^{is}:=\exp(is\log z),
$$
where $\log$ denotes the branch which is holomorphic on $\mathbb C\setminus(-\infty,0]$. For $z\in\Sigma_\sigma$ one has
$$
 |f_s(z)|
 =
 e^{-s\arg z},
$$
and therefore
$$
 \|f_s\|_{H^\infty(\Sigma_\sigma)}
 =
 e^{\sigma|s|}.
$$
By \cite[Theorem~15.1.17]{HNVW3}, the operator $f_s(L)$ defined through the extended Dunford calculus is bounded and agrees with the operator supplied by the bounded $H^\infty(\Sigma_\sigma)$-calculus. The estimate \eqref{eq:L-Hinfty-bound} therefore gives
$$
 \|L^{is}\|_{\mathscr L(L^p(\R;X))} \leq C_{p,d,\sigma}e^{\sigma|s|}, \qquad s\in\R.
$$
Thus $L$ has bounded imaginary powers in the sense of
\cite[Definition~15.3.4]{HNVW3}. In particular,
$$
 \liminf_{s\downarrow0} \|L^{is}\|_{\mathscr L(L^p(\R;X))} \leq C_{p,d,\sigma}.
$$

By the Guerre-Delabri\`ere theorem in the formulation \cite[Corollary~10.5.2]{HNVW2}, $X$ is UMD and
$$
 \beta_{p,X} \le \liminf_{s\downarrow0}\|L^{is}\|_{\mathscr L(L^p(\R;X))} \le C_{p,d,\sigma}.
$$
This proves both assertions.
\end{proof}

\begin{remark}[Comparison with the direct Ornstein--Uhlenbeck criterion]
\label{rem:Betancor-comparison}
Betancor, Castro, Curbelo and Rodr\'iguez-Mesa proved a more specific Ornstein--Uhlenbeck characterisation. In the normalisation
$$
 \mathcal O=-\frac12\frac{\!\dd^2}{\!\dd y^2}+y\frac{\!\dd}{\!\dd y}
$$
on $L^p(\R,e^{-y^2}\!\dd y;X)$, they show that boundedness of the shifted imaginary powers
$$
 \Bigl(\mathcal O+\frac12\Bigr)^{i\gamma},
 \qquad \gamma\in\R,
$$
for some (equivalently, for every) $1<p<\infty$, is equivalent to the UMD property of $X$; see \cite[Proposition~2.2]{BetancorEtAl}. Their proof is based on a local comparison of the Ornstein--Uhlenbeck and Euclidean kernels. Theorem~\ref{thm:OU-Hinfty-implies-UMD} uses a different mechanism: the dilation in \eqref{eq:blow-up-BR} makes the Euclidean
Laplacian appear as a small-scale limit.
\end{remark}

\section*{AI disclosure statement}

This manuscript is the outgrowth of extensive conversations with OpenAI's ChatGPT 5.6 model. The elegant Rademacher lift technique was discovered in this process. ChatGPT was used as an aid in exploratory mathematical discussion, verification of arguments, and editorial revision. All statements and proofs in the final manuscript have been reviewed by the author, who takes full responsibility for their correctness. I thank Emiel Lorist and Mark Veraar for useful suggestions.


\begin{thebibliography}{99}

\bibitem{ArhancetNC}
C.~Arhancet,
Analytic semigroups on vector valued noncommutative $L^p$-spaces,
\emph{Studia Math.} \textbf{216} (2013), 271--290.
\href{https://doi.org/10.4064/sm216-3-5}{doi:10.4064/sm216-3-5}.

\bibitem{ArhancetPisier}
C.~Arhancet,
On a conjecture of Pisier on the analyticity of semigroups,
\emph{Semigroup Forum} \textbf{91} (2015), 450--462.
\href{https://doi.org/10.1007/s00233-015-9715-3}
{doi:10.1007/s00233-015-9715-3}.

\bibitem{ArhancetDilation}
C.~Arhancet,
Dilations of semigroups on von Neumann algebras and noncommutative
$L^p$-spaces,
\emph{J. Funct. Anal.} \textbf{276} (2019), 2279--2314.
\href{https://doi.org/10.1016/j.jfa.2018.11.013}
{doi:10.1016/j.jfa.2018.11.013}.

\bibitem{ArhancetBochner}
C.~Arhancet,
On analyticity of semigroups on Bochner spaces and on vector-valued
noncommutative $L^p$-spaces,
arXiv:1807.00875.
\href{https://doi.org/10.48550/arXiv.1807.00875}
{doi:10.48550/arXiv.1807.00875}.

\bibitem{AFLM}
C.~Arhancet, S.~Fackler and C.~Le Merdy,
Isometric dilations and $H^\infty$ calculus for bounded analytic
semigroups and Ritt operators,
\emph{Trans. Amer. Math. Soc.} \textbf{369} (2017), 6899--6933.
\href{https://doi.org/10.1090/tran/6849}{doi:10.1090/tran/6849}.

\bibitem{Beckner1975}
W.~Beckner,
\emph{Inequalities in Fourier analysis},
Ann. of Math. (2) \textbf{102} (1975), no.~1, 159--182.
\href{https://doi.org/10.2307/1970980}{doi:10.2307/1970980}.

\bibitem{BetancorEtAl}
J.~J. Betancor, A.~J. Castro, J.~Curbelo and L.~Rodr\'iguez-Mesa,
\emph{Characterization of UMD Banach spaces by imaginary powers of
Hermite and Laguerre operators},
Complex Anal. Oper. Theory \textbf{7} (2013), no.~4, 1019--1048.
\href{https://doi.org/10.1007/s11785-011-0203-9}
{doi:10.1007/s11785-011-0203-9}.

\bibitem{Bonami1970}
A.~Bonami,
\emph{\'Etude des coefficients de Fourier des fonctions de $L^p(G)$},
Ann. Inst. Fourier (Grenoble) \textbf{20} (1970), no.~2, 335--402.
\href{https://doi.org/10.5802/aif.357}{doi:10.5802/aif.357}.

\bibitem{CarbonaroDragicevic}
A.~Carbonaro and O.~Dragi\v{c}evi\'{c},
Functional calculus for generators of symmetric contraction semigroups,
\emph{Duke Math. J.} \textbf{166} (2017), 937--974.
\href{https://doi.org/10.1215/00127094-3774526}
{doi:10.1215/00127094-3774526}.

\bibitem{EngelNagel}
K.-J.~Engel and R.~Nagel,
\emph{One-Parameter Semigroups for Linear Evolution Equations},
Graduate Texts in Mathematics, vol.~194,
Springer-Verlag, New York, 2000.
\href{https://doi.org/10.1007/b97696}{doi:10.1007/b97696}.

\bibitem{Goncalves2016}
F.~Gon\c{c}alves,
\emph{A central limit theorem for operators},
J. Funct. Anal. \textbf{271} (2016), no.~6, 1585--1603.
\href{https://doi.org/10.1016/j.jfa.2016.06.008}
{doi:10.1016/j.jfa.2016.06.008}.

\bibitem{GuerreDelabriere1991}
S.~Guerre-Delabri\`ere,
\emph{Some remarks on complex powers of $(-\Delta)$ and UMD spaces},
Illinois J. Math. \textbf{35} (1991), no.~3, 401--407.
\href{https://doi.org/10.1215/ijm/1255987786}
{doi:10.1215/ijm/1255987786}.

\bibitem{HieberPruss}
M.~Hieber and J.~Pr\"uss,
Functional calculi for linear operators in vector-valued $L^p$-spaces via
the transference principle,
\emph{Adv. Differential Equations} \textbf{3} (1998), 847--876.
\href{https://doi.org/10.57262/ade/1366292551}
{doi:10.57262/ade/1366292551}.

\bibitem{HNVW1}
T.~Hyt\"onen, J.~van Neerven, M.~Veraar and L.~Weis,
\emph{Analysis in Banach Spaces, Volume I: Martingales and Littlewood--Paley Theory}, Springer, 2016.
\href{https://doi.org/10.1007/978-3-319-48520-1}
{doi:10.1007/978-3-319-48520-1}.

\bibitem{HNVW2}
T.~Hyt\"onen, J.~van Neerven, M.~Veraar and L.~Weis,
\emph{Analysis in Banach Spaces, Volume II: Probabilistic Methods and
Operator Theory}, Springer, 2017.
\href{https://doi.org/10.1007/978-3-319-69808-3}
{doi:10.1007/978-3-319-69808-3}.

\bibitem{HNVW3}
T.~Hyt\"onen, J.~van Neerven, M.~Veraar and L.~Weis,
\emph{Analysis in Banach Spaces, Volume III: Harmonic Analysis and Spectral Theory}, Springer, 2023.
\href{https://doi.org/10.1007/978-3-031-46598-7}
{doi:10.1007/978-3-031-46598-7}.

\bibitem{HytonenLPS}
T.~P. Hyt\"onen,
Littlewood--Paley--Stein theory for semigroups in UMD spaces,
\emph{Rev. Mat. Iberoam.} \textbf{23} (2007), 973--1009.
\href{https://doi.org/10.4171/RMI/521}{doi:10.4171/RMI/521}.

\bibitem{KaltonWeis}
N.~J. Kalton and L.~Weis,
The $H^\infty$-calculus and sums of closed operators,
\emph{Math. Ann.} \textbf{321} (2001), 319--345.
\href{https://doi.org/10.1007/s002080100231}
{doi:10.1007/s002080100231}.

\bibitem{vanNeervenFA}
J.~van Neerven,
\emph{Functional Analysis},
Cambridge Studies in Advanced Mathematics, vol.~201,
Cambridge University Press, Cambridge, 2022;
second edition forthcoming.
\href{https://doi.org/10.1017/9781009232487}
{doi:10.1017/9781009232487}.

\bibitem{Neerven-Stein}
J.M.A.M.~van Neerven,
\emph{Stein's inequality and dimension-free $R$-sectoriality of vector-valued Ornstein--Uhlenbeck operators}, in preparation.

\bibitem{Neerven-Rsectorial}
J.M.A.M.~van Neerven,
\emph{On the optimal angle of dimension-free analyticity for vector-valued Ornstein--Uhlenbeck semigroups}, in preparation.

\bibitem{Pisier}
G.~Pisier,
Holomorphic semigroups and the geometry of Banach spaces,
\emph{Ann. of Math.} \textbf{115} (1982), 375--392.
\href{https://doi.org/10.2307/1971396}{doi:10.2307/1971396}.


\bibitem{Xu}
Q.~Xu,
$H^\infty$ functional calculus and maximal inequalities for semigroups of
contractions on vector-valued $L_p$-spaces,
\emph{Int. Math. Res. Not. IMRN} (2015), no.~14, 5715--5732.
\href{https://doi.org/10.1093/imrn/rnu104}{doi:10.1093/imrn/rnu104}.

\end{thebibliography}
\end{document}